\colorlet{texcscolor}{blue!50!black}
\colorlet{texemcolor}{red!70!black}
\colorlet{texpreamble}{red!70!black}
\colorlet{codebackground}{black!25!white!25}
\lstdefinestyle{siamlatex}{%
  style=tcblatex,
  texcsstyle=*\color{texcscolor},
  texcsstyle=[2]\color{texemcolor},
  keywordstyle=[2]\color{texemcolor},
  moretexcs={cref,Cref,maketitle,mathcal,text,headers,email,url},
}
\DeclareTotalTCBox{\code}{ v O{} }
{ 
  fontupper=\ttfamily\color{black},
  nobeforeafter,
  tcbox raise base,
  colback=codebackground,colframe=white,
  top=0pt,bottom=0pt,left=0mm,right=0mm,
  leftrule=0pt,rightrule=0pt,toprule=0mm,bottomrule=0mm,
  boxsep=0.5mm,
  #2}{#1}
\patchcmd\newpage{\vfil}{}{}{}
\numberwithin{equation}{section}
\newtheorem{remark}{Remark}[section]
\newcommand{\ds}{\mathrm ds}
\newcommand{\dx}{\mathrm dx}
\title{Long time $H^1$-stability of fast L2-1$_\sigma$ method on general nonuniform meshes for subdiffusion equations\thanks{Submitted in Nov 2022.}}
\author{
Chaoyu Quan
\thanks{SUSTech International Center for Mathematics, Southern University of Science and Technology, Shenzhen, P.R. China (\email{quancy@sustech.edu.cn}).}
\and
Xu Wu
\thanks{Department of Mathematics,  Harbin Institute of Technology, Harbin 150001, China; Department of Mathematics, Southern University of Science and Technology, Shenzhen, China (\email{11849596@mail.sustech.edu.cn}).}
\and
Jiang Yang
\thanks{Department of Mathematics, SUSTech International Center for Mathematics \& National Center for Applied Mathematics Shenzhen (NCAMS), Guangdong Provincial Key Laboratory of Computational Science and Material Design, Southern University of Science and Technology, Shenzhen,China  (\email{yangj7@sustech.edu.cn}).}
}
\title{Long time $H^1$-stability of fast L2-1$_\sigma$ method on general nonuniform meshes for subdiffusion equations\thanks{Submitted in Nov 2022.}}
\author{
Chaoyu Quan
\thanks{SUSTech International Center for Mathematics, Southern University of Science and Technology, Shenzhen, P.R. China (\email{quancy@sustech.edu.cn}).}
\and
Xu Wu
\thanks{Department of Mathematics,  Harbin Institute of Technology, Harbin 150001, China; Department of Mathematics, Southern University of Science and Technology, Shenzhen, China (\email{11849596@mail.sustech.edu.cn}).}
\and
Jiang Yang
\thanks{Department of Mathematics, SUSTech International Center for Mathematics \& National Center for Applied Mathematics Shenzhen (NCAMS), Guangdong Provincial Key Laboratory of Computational Science and Material Design, Southern University of Science and Technology, Shenzhen,China  (\email{yangj7@sustech.edu.cn}).}
}
\begin{document}
\maketitle
\sloppy

\begin{abstract}
In this work, the global-in-time $H^1$-stability of a fast L2-1$_\sigma$ method on general nonuniform meshes is studied for subdiffusion equations, where the convolution kernel in the Caputo fractional derivative is approximated by sum of exponentials. Under some mild restrictions on time stepsize, a bilinear form associated with the fast L2-1$_\sigma$  formula is proved to be positive semidefinite for all time. As a consequence,  the uniform global-in-time $H^1$-stability of the fast L2-1$_\sigma$ schemes can be  derived  for both linear  and semilinear  subdiffusion equations,  in the sense that the $H^1$-norm is uniformly bounded as the time tends to infinity. To the best of our knowledge, this appears to be  the first work for the global-in-time $H^1$-stability of fast L2-1$_\sigma$ scheme on general nonuniform meshes for subdiffusion equations. Moreover, the sharp finite time $H^1$-error estimate for the fast L2-1$_\sigma$ schemes is reproved based on more delicate analysis of coefficients where the restriction on time step ratios is relaxed comparing to existing works.
\end{abstract}

\begin{keywords}
subdiffusion equations, fast L2-1$_\sigma$ method, global-in-time $H^1$-stability, nonuniform meshes, sharp error estimate
\end{keywords}
\begin{AMS}
35R11, 65M12
\end{AMS}

\section{Introduction}
In the past decade, the time-fractional diffusion equation \cite{metzler2000random,gorenflo2002time}  attract much attention from researchers in theoretical and numerical analysis.  Developing stable and accurate numerical method has been a core issue in this longstanding and active research area. Many well-known schemes have been developed and analyzed, including the piecewise polynomial interpolation methods (such as L1, L2-1$_\sigma$ and L2 schemes),  discontinuous Galerkin (dG) methods, and convolution quadrature (CQ) methods under the assumption that the solution is sufficiently smooth; see  \cite{langlands2005accuracy,sun2006fully, alikhanov2015new, gao2014new, lv2016error,jin2017correction,mustapha2014discontinuous}. 
However, the solutions to time-fractional problems typically admit weak singularities.  This inspires researchers to design  numerical schemes to overcome this singularity problem. For example, the L1, L2-1$_\sigma$,  L2 and dG methods on the graded meshes or general nonuniform meshes have been developed to overcome the singularities of time-fractional diffusion equation in \cite{stynes2017error, kopteva2019error, chen2019error, kopteva2020error, kopteva2021error, liao2018sharp, liao2019discrete,liao2018second, mustapha2014discontinuous, mustapha2012uniform}.  The L1, L2 and CQ schemes with proper initial correction using uniform step size can regain the desired high-order accuracy for linear subdiffusion problems with singularities; see \cite{yan2018analysis, xing2018higher,jin2017correction,jin2020subdiffusion}.  Recently, there are also plenty of works on the  stability and convergence of numerical methods for semilinear time-fractional equations; see \cite{jin2018numerical,al2019numerical,wang2020high,liao2020second,du2020time, li2022exponential}. Particularly, the energy dissipation property for a class of fractional phase field models has been a core issue \cite{liao2020second,du2020time,karaa2021positivity}, which essentially amounts to uniform global-in-time $H^1$-stability. This motivates us to study the long time $H^1$-stability of various numerical schemes for semilinear time-fractional equations.

In addition to the weak singularity problem the accuracy of numerical solutions,  another important issue  is the storage problem due to the nonlocality of the fractional derivatives.  Precisely speaking, all the aforementioned works require $\mathcal{O}(N)$ storage and $\mathcal{O}(N^2)$ computational cost for $N$ time steps, which are too expensive. Then the fast algorithms to reduce computational storage and cost have been developed.
Lubich-Sch{\"a}dle \cite{lubich2002fast} propose a new algorithm for the evaluation of convolution integrals 
for wave problems with non-reflecting boundary conditions. 
 Based on the block triangular Toeplitz matrix or block triangular Toeplitz-like matrix,
 Ke-Ng-Sun \cite{ke2015fast} and Lu-Peng-Sun \cite{lu2015fast} propose a fast approximation to time-fractional derivative. Baffet-Hesthaven \cite{baffet2017kernel} compress the kernel in the Laplace domain and obtained a sum-of-poles approximation for the Laplace transform of the kernel. Jiang-Zhang-Zhang-Zhang \cite{jiang2017fast} propose the sum-of-exponentials (SOE) approximation to speed up the evaluation of weakly singular  Caputo derivative kernel, which significantly reduces the computational storage and cost.  Along this way, the fast L1 and L2-1$_\sigma$ schemes are proposed and analyzed on both uniform and nonuniform meshes in \cite{liao2019unconditional,yan2017fast,li2021second,liu2022unconditionally, shen2018fast,song2021high}, and Zhu-Xu \cite{zhu2019fast} study the fast L2 method on uniform meshes based on the SOE approximation. 
 The SOE approximation allows us to reduce the storage and overall computational cost from  $\mathcal{O}(N)$  and $\mathcal{O}(N^2)$ for the L1, L2-1$_\sigma$, and L2 scheme to  $\mathcal{O}(N_q)$ and $\mathcal{O}(N_q N)$ for the fast method, respectively, with $N$ being the number of the time steps and $N_q$ being the number of the quadrature nodes. 

In this work, we apply the fast L2-1$_\sigma$ scheme on general nonuniform meshes for subdiffusion equation with homogeneous Dirichlet boundary condition, where the L2-1$_\sigma$  method \cite{alikhanov2015new} and the SOE  technique \cite{jiang2017fast, yan2017fast} are combined to approximate the Caputo fractional derivative.
Given a fractional order $\alpha\in(0,\,1)$, an absolute tolerance error $\varepsilon\ll{1}$, a cut-off time $\Delta{t}>0$ and  a fixed time $T_{\rm{soe}}>\Delta t$, it is well-known that there exists an SOE approximation to the convolution kernel, see Lemma \ref{lem:SOE_approx} for details. 
Denote the fast L2-1$_\sigma$ operator based on this SOE approximation by $F_k^{\alpha,*}$. In \cite{liao2020second}, the authors state that the positive semidefiniteness of the bilinear given as below form with the standard L2-1$_\sigma$ formula is an open problem, 
\begin{equation*}\label{eq:Bn}
    \mathcal B_n(v,w) = \sum_{k=1}^{n}\langle F_k^{\alpha,*} v, \delta_k w\rangle,\quad \delta_k w \coloneqq w^k-w^{k-1},~n\geq 1,
\end{equation*}
which is solved recently in \cite{quan2022stability} by some of us. 
In this work, we generalize this positive semidefiniteness to the fast L2-1$_\sigma$ formula. We prove that above bilinear form 
is positive semidefinite if the time steps $\{\tau_{k}\}_{k\geq 1}$ satisfy (see Theorem \ref{thm1})
\begin{equation*}
\begin{aligned}
    &\rho_k\coloneqq \tau_k/\tau_{k-1}\ge 0.475329,\quad
    \varepsilon\le \min _{k\ge 1}\frac{1}{5(1-\alpha)(\sigma\tau_k)^\alpha},\quad 
    \Delta t   \le \min_{k\ge 2}\sigma \tau_k, \\ 
    & T_{\rm{soe}}\ge \max_{k\ge2 }(\sigma\tau_{k+1}+\tau_k).
\end{aligned}
\end{equation*}
Based on this result, we prove that the $H^1$-stability of the fast  L2-1$_\sigma$ scheme for linear subdiffusion equation holds true even when the time $t$ is larger than $T_{\rm{soe}}$ and tends to infinity, (see Theorem \ref{thm2}),
while for semilinear subdiffusion equations, an additional restriction on $\tau_k$ should be satisfied to ensure similar stability (see Theorem \ref{thm:nonlinear}).  
To the best of our knowledge, this positive semidefiniteness result as well as the corresponding global-in-time $H^1$-stability is new.
On the other hand, the $H^1$-error estimate of the fast L2-1$_\sigma$ method have been well studied for subdiffusion equations on general nonuniform meshes in \cite{li2021second, liu2022unconditionally}.
But we still repeat the proof of the finite time optimal convergence rate in $H^1$-norm and reduce the restriction on time step ratios from $\rho_k\geq 2/3$ to $\rho_k\geq 0.475329 $.

This work is organized as follows. 
In Section~\ref{sect2}, the L2-1$_\sigma$ formula and the SOE approximation are recalled, and the fast L2-1$_\sigma$ formula is then derived. 
In Section~\ref{sect3}, we prove the positive semidefiniteness of the bilinear form $\mathcal B_n$ under some mild restrictions on the time steps. 
In Section~\ref{sect4}, we establish the global-in-time $H^1$-stability of the L2-1$_\sigma$ scheme for both the linear and semilinear subdiffusion equations, based on the above positive semidefiniteness result. In Section~\ref{sect5}, the sharp finite time $H^1$-error estimate of the fast L2-1$_\sigma$ scheme for the subdiffusion equation is provided. 
In Section~\ref{sect6}, we provide some numerical results to verify our theorems. 
Some conclusions are provided in Section~\ref{sect7}.

\section{ Fast L2-1$_\sigma$ formula}\label{sect2}
We first recall the L2-1$_\sigma$ method and then the fast L2-1$_\sigma$ method on an arbitrary nonuniform mesh for approximating the Caputo fractional derivative defined by
\begin{equation*}
    \partial_t^\alpha u = \frac{1}{\Gamma(1-\alpha)} \int_0^t \frac{u'(s)}{(t-s)^\alpha} \, \ds.
\end{equation*}
In the following content, we consider a nonuniform time mesh $0 = t_0<t_1<\ldots<t_k\ldots$ with time step $\tau_k = t_k-t_{k-1}$.

\subsection{L2-1$_\sigma$ formula}
We first briefly recall the L2-1$_\sigma$ scheme, which is constructed in \cite{alikhanov2015new}.
The fractional derivative $\partial_t^\alpha u(t_k^*)$ is approximated by the L2-1$_\sigma$ formula
\begin{align}\label{eq:lkstar}
    L_k^{\alpha,*} u 
    & = \frac{1}{\Gamma(1-\alpha)} \left(
    \sum_{j=1}^{k-1}\int_{t_{j-1}}^{t_j} \frac{\partial_s \Pi_{2,j} (s)}{(t_k^*-s)^\alpha}\,\ds + \int_{t_{k-1}}^{t_k^*} \frac{\partial_s \Pi_{1,k} (s)}{(t_k^*-s)^\alpha}\,\ds \right) \\
    & = \frac{1}{\Gamma(1-\alpha)}\left( \sum_{j=1}^{k-1} (a_{j}^{(k)} u^{j-1} + b_{j}^{(k)} u^j + c_{j}^{(k)} u^{j+1} ) 
    \right)+\frac{\sigma^{1-\alpha}(u^k-u^{k-1})}{\Gamma(2-\alpha)\tau_k^{\alpha}},\nonumber
\end{align} 
where $ t_k^*\coloneqq t_{k-1}+\sigma\tau_k$,  $\sigma=1-\alpha/2$, 
\begin{align*}
\Pi_{2,j} (t) &\coloneqq \frac{(t-t_j)(t-t_{j+1})}{(t_{j-1}-t_j)(t_{j-1}-t_{j+1})} u^{j-1} + \frac{(t-t_{j-1})(t-t_{j+1})}{(t_{j}-t_{j-1})(t_{j}-t_{j+1})} u^{j} \\ 
&\quad + \frac{(t-t_{j-1})(t-t_{j})}{(t_{j+1}-t_{j-1})(t_{j+1}-t_{j})} u^{j+1},\\
\Pi_{1,k} (t) & \coloneqq\frac{t-t_k}{t_{k-1}-t_k} u^{k-1} + \frac{t-t_{k-1}}{t_{k}-t_{k-1}} u^k,
\end{align*}
for $1\leq j\leq k-1$, and 
{\small
\begin{equation}\label{eq:akjl21sigma}
\begin{aligned}
   a_{j}^{(k)} & =  \int_{t_{j-1}}^{t_j} \frac{2s -t_j-t_{j+1}}{\tau_{j}(\tau_{j}+\tau_{j+1})} \frac{1}{(t_k^*-s)^\alpha}\,\ds 
  ,\quad
    b_{j}^{(k)}   = \int_{t_{j-1}}^{t_j} \frac{2s -t_{j-1}-t_{j+1}}{-\tau_{j}\tau_{j+1}} \frac{1}{(t_k^*-s)^\alpha}\,\ds 
  , \\
    c_{j}^{(k)}  & =  \int_{t_{j-1}}^{t_j} \frac{2s -t_{j-1}-t_{j}}{\tau_{j+1}(\tau_{j}+\tau_{j+1})} \frac{1}{(t_k^*-s)^\alpha}\,\ds.
\end{aligned}
\end{equation}}
The coefficients satisfy that  
$
a_{j}^{(k)}+b_{j}^{(k)}+c_{j}^{(k)} =0,
$
for $1\leq j\leq k-1$.
Then  the discrete fractional derivative in \eqref{eq:lkstar} can be reformulated as
{\small
\begin{equation}\label{eq:L21operator}
\begin{aligned}
L_k^{\alpha,*} u
  &=\frac{1}{\Gamma(1-\alpha)}\left( c_{k-1}^{(k)} \delta_k u -a_{1}^{(k)}\delta_1 u  +\sum_{j=2}^{k-1} d_{j}^{(k)} \delta_j u \right)+\frac{\sigma^{1-\alpha}}{\Gamma(2-\alpha)\tau_k^{\alpha}}\delta_k u, 
    \end{aligned} 
\end{equation}}
where $\delta_j u= u^j-u^{j-1}$ and $d_{j}^{(k)}\coloneqq c_{j-1}^{(k)}-a_{j}^{(k)}$. Here we make a convention that $a_{1}^{(1)}=0$ and $c_{0}^{(1)}=0$; see \cite{quan2022stability} for details.
\subsection{Fast L2-1$_\sigma$ formula} Note that the right-hand side of \eqref{eq:L21operator} involves a sum of all previous solutions $\{u^j\}_{j=0}^k$, which reflects the memory effect of the nonlocal fractional operator. This leads to expensive computational cost as the number of time steps increases. To improve the efficiency, one can use the following  SOE approximation result:
\begin{lemma}[\cite{jiang2017fast,liao2019unconditional}]\label{lem:SOE_approx}
For the given $\alpha\in(0,\,1)$, an absolute tolerance error $\varepsilon\ll{1}$, a cut-off time $\Delta{t}>0$ and  a fixed time $T_{\rm{soe}}>\Delta t$, there exists a positive integer $N_{q}$, positive quadrature nodes $\theta^{\ell}$ and corresponding positive weights $\varpi^{\ell}\,(1\leq{\ell}\leq{N_{q}})$ such that
\begin{align*}
\bigg|
t^{-\alpha}
-\sum_{\ell=1}^{N_{q}}\varpi^{\ell}e^{-\theta^{\ell}t}\bigg|\leq\varepsilon,
\quad
\forall\,{t}\in[\Delta{t},\,T_{\rm{soe}}],
\end{align*}
where the number $N_q$ of quadrature nodes satisfies
\begin{equation*}
N_q= \mathcal{O}\bigg(
\log\frac{1}{\varepsilon}\left(
\log\log\frac{1}{\varepsilon} + \log\frac{T_{\rm{soe}}}{\Delta t}\right)
+ \log\frac{1}{\Delta t}\left(
\log\log\frac{1}{\varepsilon} + \log\frac{1}{\Delta t}\right)
\bigg).
\end{equation*}
\end{lemma}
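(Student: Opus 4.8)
The plan is to realize the sum of exponentials as a quadrature discretization of the classical integral representation of the kernel,
\begin{equation*}
t^{-\alpha}=\frac{1}{\Gamma(\alpha)}\int_0^{\infty}e^{-st}s^{\alpha-1}\,\ds,
\end{equation*}
which follows from the definition of $\Gamma(\alpha)$ after the substitution $u=st$. Any quadrature rule with nodes $s_\ell\in(0,\infty)$ and weights $w_\ell>0$ applied to the right-hand side produces an approximation $\sum_\ell \varpi^\ell e^{-\theta^\ell t}$ with $\theta^\ell=s_\ell$ and $\varpi^\ell=\frac{1}{\Gamma(\alpha)}w_\ell s_\ell^{\alpha-1}$. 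Since Gaussian weights are positive and all nodes are taken in $(0,\infty)$, both $\theta^\ell$ and $\varpi^\ell$ are automatically positive, so the positivity assertions come for free; the entire task reduces to controlling the quadrature error uniformly for $t\in[\Delta t,T_{\mathrm{soe}}]$ while keeping the total number of nodes $N_q$ small.

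First I would truncate the two tails of the $s$-integral. On the far field $[b,\infty)$ the bound $e^{-st}\le e^{-s\Delta t}$ holds for all admissible $t$, so $\frac{1}{\Gamma(\alpha)}\int_b^{\infty}e^{-st}s^{\alpha-1}\,\ds$ is exponentially small once $b=\mathcal O(\Delta t^{-1}\log\varepsilon^{-1})$; this single choice is what injects the factors $\log\Delta t^{-1}$ and $\log\varepsilon^{-1}$ into the problem. On the near field $[0,a]$ the exponential is harmless but the prefactor $s^{\alpha-1}$ is singular, yet still integrable, and $\frac{1}{\Gamma(\alpha)}\int_0^a e^{-st}s^{\alpha-1}\,\ds\le \frac{a^{\alpha}}{\Gamma(\alpha+1)}$, so dropping it costs at most $\varepsilon$ provided $a=\mathcal O(\varepsilon^{1/\alpha})$. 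What remains is the bulk integral over $[a,b]$.

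I would then partition $[a,b]$ into dyadic panels $[2^j,2^{j+1}]$ and apply composite Gauss quadrature on each. After rescaling a panel to the reference interval, the integrand $e^{-st}s^{\alpha-1}$ extends analytically to a Bernstein ellipse, so the Gauss--Legendre error decays geometrically in the number of nodes and $\mathcal O(\log\varepsilon^{-1})$ nodes per panel suffice, up to a $\log\log\varepsilon^{-1}$ correction absorbing the summation over panels and the mild growth of the $s^{\alpha-1}$ factor; near the singular endpoint a Gauss--Jacobi rule carrying the weight $s^{\alpha-1}$ avoids having to refine dyadically all the way down to $a$. Summing the nodes, the number of panels is $\mathcal O(\log(b/a))=\mathcal O(\log(T_{\mathrm{soe}}/\Delta t)+\log\Delta t^{-1})$, and multiplying by the per-panel node count and recombining the near- and far-field allocations yields the stated two-term bound for $N_q$.

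The main obstacle is not any individual estimate but the sharp bookkeeping needed to reach the precise two-term logarithmic bound rather than a crude $(\log\varepsilon^{-1})^2$ one. The number of Gauss nodes a panel $[2^j,2^{j+1}]$ actually requires is governed by the local size of $st$, i.e.\ by $2^j t$ as $t$ ranges over $[\Delta t,T_{\mathrm{soe}}]$: panels with $2^j\lesssim T_{\mathrm{soe}}^{-1}$ see an almost constant exponential and need very few nodes, panels with $2^j\gtrsim\Delta t^{-1}$ already lie in the truncated far field, and only the $\mathcal O(\log(T_{\mathrm{soe}}/\Delta t))$ panels in between are genuinely expensive. Allocating nodes panel-by-panel according to this local scale, and balancing that allocation against the two truncation radii $a$ and $b$, is exactly what separates the $\log\varepsilon^{-1}\log(T_{\mathrm{soe}}/\Delta t)$ contribution from the $\log\Delta t^{-1}(\log\log\varepsilon^{-1}+\log\Delta t^{-1})$ contribution and produces the claimed estimate.
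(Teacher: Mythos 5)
The paper does not prove this lemma at all---it is quoted verbatim, with its proof delegated to the cited references \cite{jiang2017fast,liao2019unconditional}---so the only meaningful comparison is with the construction in those references. Your sketch (the Gamma-function integral representation $t^{-\alpha}=\frac{1}{\Gamma(\alpha)}\int_0^\infty e^{-st}s^{\alpha-1}\,\mathrm{d}s$, two-sided truncation, dyadic Gauss--Legendre panels with a Gauss--Jacobi rule absorbing the $s^{\alpha-1}$ singularity instead of refining to the origin, and node allocation governed by the local size of $st$ over $t\in[\Delta t, T_{\rm soe}]$) is precisely the construction used there, and it is correct at the level of detail given.
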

\begin{remark}
    Note that in this work, $T_{\rm{soe}}$ is not necessarily the total time. In the later $H^1$-stability analysis, the total time can go to infinity for fixed $T_{\rm{soe}}$.
\end{remark}
The L2-1$_\sigma$ formula $L_k^{\alpha,*}u$ is split  into a history part $H(t_{k}^*)$  and  a local part $L(t_k^*)$  as:
\begin{equation*}
   H(t_{k}^*)\coloneqq \frac{1}{\Gamma(1-\alpha)}\int_{0}^{t_{k-1}}\frac{\partial_s\Pi_2 (s)}{(t_k^*-s)^\alpha}\ \ds,\quad  L(t_k^*)\coloneqq \frac{1}{\Gamma(1-\alpha)}\int_{t_{k-1}}^{t_k^*}\frac{\partial_s \Pi_{1,k} (s)}{(t_k^*-s)^\alpha}\ \ds,
\end{equation*}
where $\Pi_2(s)=\Pi_{2,j}(s)$ on the interval $(t_{j-1},t_j)$, $1\le j\le k-1$. 
The fast L2-1$_\sigma$ formula is obtained by replacing $(t_k^*-s)^{-\alpha}$ in $H(t_k^*)$ with the SOE approximation in Lemma~\ref{lem:SOE_approx}:
\begin{align}\label{eq:soel21}
    F_k^{\alpha,*} u 
    & = \frac{1}{\Gamma(1-\alpha)} \left(
   \int_{0}^{t_{k-1}}\partial_s\Pi_2 (s)\sum_{\ell=1}^{N_{q}}\varpi^{\ell}e^{-\theta^{\ell}(t_k^*-s)}\,\ds + \int_{t_{k-1}}^{t_k^*} \frac{\partial_s \Pi_{1,k} (s)}{(t_k^*-s)^\alpha}\,\ds \right)\\
    & = \frac{  \sum_{\ell=1}^{N_{q}}\varpi^{\ell} H^\ell(t_{k}^*)}{\Gamma(1-\alpha)}+\frac{\sigma^{1-\alpha}(u^k-u^{k-1})}{\Gamma(2-\alpha)\tau_k^{\alpha}}, \nonumber
\end{align}   
where
$
    H^\ell(t_{k}^*)=\int_{0}^{t_{k-1}}\partial_s\Pi_2 (s)e^{-\theta^{\ell}(t_k^*-s)}\,\ds  \ \text{for} \ 1\le \ell\le N_q.
$ 
$H^\ell(t_k^*)$  can be computed by the following recurrence formulation:
\begin{equation*}\label{eq:recH}
\begin{aligned}
     H^\ell(t_{k}^*)&=\int_{0}^{t_{k-2}}\partial_s\Pi_2 (s)e^{-\theta^{\ell}(t_k^*-s)}\,\ds +\int_{t_{k-2}}^{t_{k-1}}\partial_s\Pi_{2,k-1} (s)e^{-\theta^{\ell}(t_k^*-s)}\,\ds\\
     &=e^{-\theta^{\ell}[(1-\sigma)\tau_{k-1}+\sigma\tau _k]}H^\ell(t_{k-1}^*)+\hat{a}^{k,\ell}_{k-1}u^{k-2}+\hat{b}^{k,\ell}_{k-1}u^{k-1}+\hat{c}^{k,\ell}_{k-1}u^{k},
\end{aligned}
\end{equation*}
with $ H^\ell(t_{1}^*)=0$ and for  $k\ge 2$
{ \small
\begin{align}\label{eq:akjfastl21sigma}
   \hat{a}^{k,\ell}_{k-1} & =  \int_{t_{k-2}}^{t_{k-1}} \frac{2s -t_{k-1}-t_{k}}{\tau_{k-1}(\tau_{k-1}+\tau_{k})} e^{-\theta^{\ell}(t_k^*-s)}\,\ds ,\quad
   \hat{b}^{k,\ell}_{k-1}  = \int_{t_{k-2}}^{t_{k-1}} \frac{2s -t_{k-2}-t_{k}}{-\tau_{k-1}\tau_{k}}e^{-\theta^{\ell}(t_k^*-s)}\,\ds ,\\
  \hat{c}^{k,\ell}_{k-1} & =  \int_{t_{k-2}}^{t_{k-1}} \frac{2s -t_{k-2}-t_{k-1}}{\tau_{k}(\tau_{k-1}+\tau_{k})}e^{-\theta^{\ell}(t_k^*-s)}\,\ds.\nonumber
\end{align}}
 Comparing $L^{\alpha,*}_k u$ with  $F_k^{\alpha,*} u$, we see that the former requires all the previous time step value $\{u^j\}_{j=1}^{k}$, while the latter only needs $u^k,\ u^{k-1},\ u^{k-2}$, and $H^\ell(t_k^*)$, $\ell=1,\cdots,N_q$. This means that  approximating $\partial_t^\alpha u(t_k^*)$ by $F_k^{\alpha,*} u$ rather than $L_k^{\alpha,*} u$  could reduces the storage and computational cost,  when $k$ is large. Roughly speaking, the storage cost can be reduced  from $\mathcal{O}(N)$ to $\mathcal{O}(N_q)$ and the computational cost can be reduced from $\mathcal{O}(N^2)$ to $\mathcal{O}(N_q N)$ by replacing $L_k^{\alpha,*} u$ with $F_k^{\alpha,*} u$.

The main purpose of this work is to establish global-in-time  $H^1$ stability of the  fast L2-1$_\sigma$ scheme for subdiffusion equation. 
\section{Positive semidefiniteness of bilinear form $\mathcal B_n$}\label{sect3}
 In this part,  we state and prove the main results on positive semidefiniteness of a bilinear form  associated with $F_k^{\alpha,*}u$, that  will be used to  establish the  global-in-time $H^1$-stability of  the fast L2-1$_\sigma$ scheme for subdiffusion equations.
 
 Firstly, we shall reformulate the formula \eqref{eq:soel21} as
 {\small
\begin{align}\label{eq:soeFk}
    F_k^{\alpha,*} u 
    & = \frac{1}{\Gamma(1-\alpha)} \left(\sum_{j=1}^{k-1}
   \int_{t_{j-1}}^{t_{j}}\partial_s\Pi_{2,j} (s)\sum_{\ell=1}^{N_{q}}\varpi^{\ell}e^{-\theta^{\ell}(t_k^*-s)}\,\ds + \int_{t_{k-1}}^{t_k^*} \frac{\partial_s \Pi_{1,k} (s)}{(t_k^*-s)^\alpha}\,\ds \right)\\
    & =  \frac{1}{\Gamma(1-\alpha)} \sum_{j=1}^{k-1} \left(\hat{a}_{j}^{(k)} u^{j-1} + \hat{b}_{j}^{(k)} u^j + \hat{c}_{j}^{(k)} u^{j+1}\right ) +
   \frac{\sigma^{1-\alpha}}{\Gamma(2-\alpha)\tau_k^{\alpha}}\delta_k u\nonumber\\
     &=\frac{1}{\Gamma(1-\alpha)} \sum_{j=1}^{k-1} \left(\hat{c}_{j}^{(k)} \delta_{j+1} u - \hat{a}_{j}^{(k)}\delta_j u\right)+\frac{\sigma^{1-\alpha}}{\Gamma(2-\alpha)\tau_k^{\alpha}}\delta_k u\nonumber\\
  &=\frac{1}{\Gamma(1-\alpha)}\left( \hat{c}_{k-1}^{(k)} \delta_k u   +\sum_{j=1}^{k-1} \hat{d}_{j}^{(k)} \delta_j u \right)+\frac{\sigma^{1-\alpha}}{\Gamma(2-\alpha)\tau_k^{\alpha}}\delta_k u, \nonumber
\end{align}  }
where  $\delta_j u= u^j-u^{j-1}$,
  \begin{align}\label{eq:aj}
   \hat{a}_{j}^{(k)} &  =  \int_{t_{j-1}}^{t_j} \frac{2s -t_j-t_{j+1}}{\tau_{j}(\tau_{j}+\tau_{j+1})}\sum_{\ell=1}^{N_{q}}\varpi^{\ell}e^{-\theta^{\ell}(t_k^*-s)}\,\ds,\nonumber\\
    \hat{b}_{j}^{(k)} & = \int_{t_{j-1}}^{t_j} \frac{2s -t_{j-1}-t_{j+1}}{-\tau_{j}\tau_{j+1}} \sum_{\ell=1}^{N_{q}}\varpi^{\ell}e^{-\theta^{\ell}(t_k^*-s)}\,\ds,\\
\hat{c}_{j}^{(k)}  & =\int_{t_{j-1}}^{t_j} \frac{2s -t_{j-1}-t_{j}}{\tau_{j+1}(\tau_{j}+\tau_{j+1})}\sum_{\ell=1}^{N_{q}}\varpi^{\ell}e^{-\theta^{\ell}(t_k^*-s)}\,\ds,\nonumber\\
\hat{d}^{(k)}_j&= \hat{c}^{(k)}_{j-1}-\hat{a}^{(k)}_{j},\nonumber
\end{align}
for $1\le j\le k-1$. In the reformulation \eqref{eq:soeFk}, we use the  fact $\hat{a}^{(k)}_{j}+\hat{b}^{(k)}_{j}+\hat{c}^{(k)}_{j}=0$. Here we make a convention that $\hat{c}^{(k)}_{0}=0$, i.e., $ \hat{d}^{(k)}_{1}=-\hat{a}^{(k)}_{1}$.

We now give some properties of the fast L2-1$_\sigma$ coefficients  $\hat{d}^{(k)}_j$ in \eqref{eq:aj}.
\begin{lemma}[Properties of  $\hat{d}^{(k)}_j$]\label{lemmapro}
For the fast L2-1$_\sigma$ coefficients $\hat{d}^{(k)}_j$  on a  nonuniform mesh $\{\tau_j\}_{j\ge1}$  defined in \eqref{eq:aj}, 
the following properties hold:
\begin{itemize}
\item[(P1)] $\hat{d}^{(k)}_j>0,~1\leq j\leq k-1,~k\geq 2$;
\item[(P2)] $\hat{d}^{(k+1)}_{j}-\hat{d}^{(k)}_j<0,~1\leq j\leq k-1,~k\geq 2$.
\end{itemize}
Furthermore, if the nonuniform mesh $\{\tau_j\}_{j\ge1}$
 with  $\rho_j \coloneqq \tau_j/\tau_{j-1}$ satisfies
\begin{equation}\label{condition:rho}
    \rho_j\ge \eta \approx 0.475329 ,\quad \forall j\geq 2,
\end{equation}
where $\eta $ is the unique positive root of $1-3\rho^2(1+\rho)=0$, then
\begin{itemize}
\item[(P3)] $\hat{d}^{(k)}_{j+1}-\hat{d}^{(k)}_j>0,~1\leq j\leq k-2,~k\geq 3$;
\item[(P4)] $\hat{d}^{(k)}_{j+1}-\hat{d}^{(k)}_j>\hat{d}^{(k+1)}_{j+1}-\hat{d}^{(k+1)}_{j},~1\leq j\leq k-2,~k\geq 3$.
\end{itemize}
\end{lemma}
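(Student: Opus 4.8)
The plan is to exploit the fact that every coefficient in \eqref{eq:aj} is a \emph{linear} functional of the discrete kernel $K_k(s)\coloneqq\sum_{\ell=1}^{N_q}\varpi^\ell e^{-\theta^\ell(t_k^*-s)}$, and that this kernel is a nonnegative combination---the weights $\varpi^\ell$ and nodes $\theta^\ell$ are positive---of pure exponentials. Since (P1) and (P3) each assert that a fixed linear combination of the $\hat a^{(k)}_j,\hat c^{(k)}_j$ is positive, it suffices to prove them when $K_k$ is replaced by a single exponential $e^{-\theta(t_k^*-s)}$, $\theta>0$; the general case then follows by multiplying by $\varpi^\ell>0$ and summing. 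Writing $\hat d^{(k)}_j[\theta]$ for the value of $\hat d^{(k)}_j$ obtained from this one-exponential kernel, the integration intervals $[t_{j-2},t_{j-1}]$ and $[t_{j-1},t_j]$ do not depend on $k$, so
\begin{equation*}
\hat d^{(k)}_j[\theta]=e^{-\theta t_k^*}\,C_j[\theta],\qquad C_j[\theta]\coloneqq\int_{t_{j-2}}^{t_{j-1}}\frac{2s-t_{j-2}-t_{j-1}}{\tau_j(\tau_{j-1}+\tau_j)}e^{\theta s}\,\ds-\int_{t_{j-1}}^{t_j}\frac{2s-t_j-t_{j+1}}{\tau_j(\tau_j+\tau_{j+1})}e^{\theta s}\,\ds,
\end{equation*}
with $C_j[\theta]$ independent of $k$. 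This factorization is the engine that reduces the four properties to two.

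Indeed, since $t_{k+1}^*-t_k^*=(1-\sigma)\tau_k+\sigma\tau_{k+1}>0$, we have $\hat d^{(k+1)}_j[\theta]=e^{-\theta(t_{k+1}^*-t_k^*)}\hat d^{(k)}_j[\theta]$ with $0<e^{-\theta(t_{k+1}^*-t_k^*)}<1$. Hence (P2), namely $\hat d^{(k+1)}_j-\hat d^{(k)}_j<0$, reduces to $\hat d^{(k)}_j[\theta]>0$, which is exactly (P1); and (P4), namely $(\hat d^{(k)}_{j+1}-\hat d^{(k)}_j)-(\hat d^{(k+1)}_{j+1}-\hat d^{(k+1)}_j)>0$, reduces via the same factor to $(1-e^{-\theta(t_{k+1}^*-t_k^*)})(\hat d^{(k)}_{j+1}[\theta]-\hat d^{(k)}_j[\theta])>0$, i.e.\ to (P3). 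Summing against the positive weights $\varpi^\ell$ then yields (P2) and (P4) in full, so after this reduction only the single-exponential versions of (P1) and (P3) remain.

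For (P1) I would argue by the sign of the integrand. On $[t_{j-1},t_j]$ the factor $2s-t_j-t_{j+1}\le t_j-t_{j+1}=-\tau_{j+1}<0$, so the second integral defining $C_j[\theta]$ is negative and contributes positively; on $[t_{j-2},t_{j-1}]$ the factor $2s-t_{j-2}-t_{j-1}$ is odd about the midpoint while $e^{\theta s}$ is increasing, so the positive half outweighs the negative half and the first integral is positive. Thus $C_j[\theta]>0$ (the convention $\hat c_0^{(k)}=0$ handling $j=1$), giving (P1), whence (P2). No mesh restriction enters here, consistent with the statement.

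The hard part is (P3): $C_{j+1}[\theta]>C_j[\theta]$ for every $\theta>0$. Here the two difference quotients live on the adjacent blocks $[t_{j-2},t_j]$ and $[t_{j-1},t_{j+1}]$, so $C_{j+1}[\theta]-C_j[\theta]$ depends on $\tau_{j-1},\tau_j,\tau_{j+1},\tau_{j+2}$, i.e.\ on $\rho_j,\rho_{j+1},\rho_{j+2}$. The plan is to evaluate the four elementary integrals $\int(\text{linear})e^{\theta s}\,\ds$ in closed form, factor out a common $e^{\theta t_{j-1}}$, and introduce the dimensionless variable $x=\theta\tau_j$ together with the ratios $\rho_\bullet$, reducing the claim to positivity of a scalar function $\Psi(x;\rho_j,\rho_{j+1},\rho_{j+2})$ for all $x>0$. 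A delicate point is that the constant-kernel limit $x\to0^+$ gives $C_{j+1}-C_j=0$ (one checks directly that $\hat c^{(k)}_{j-1}=0$ and $\hat a^{(k)}_j=-1$ for the constant kernel), so the sign is governed by the first nonvanishing term of the small-$x$ expansion; requiring that coefficient to be nonnegative is precisely what should produce the cubic threshold $1-3\rho^2(1+\rho)=0$ and hence $\rho_j\ge\eta$. I expect the genuine obstacle to be upgrading this asymptotic positivity to all $x>0$: one must show $\Psi$ does not change sign for large $x$ either, which I would handle by monotonicity of $\Psi$ in $x$ or by bounding the residual exponential terms, mirroring and refining the coefficient analysis of \cite{quan2022stability} for the power kernel so as to reach $\eta$ in place of the earlier $2/3$.
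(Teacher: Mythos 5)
Your reduction machinery is sound and, in fact, mirrors the paper's own mechanism. Since $\varpi^\ell,\theta^\ell>0$, it does suffice to treat a single exponential kernel, and the factorization $\hat d^{(k)}_j[\theta]=e^{-\theta t_k^*}C_j[\theta]$ with $C_j[\theta]$ independent of $k$ is correct: it yields (P2) from single-exponential (P1) and (P4) from single-exponential (P3), exactly as the paper deduces (P4) from its proof of (P3) (the bracket in \eqref{ineq:dkjdiff} carries its $k$-dependence only through $e^{-\theta^\ell t_k^*}$). Your sign argument for (P1) --- the integrand of $\hat a^{(k)}_j$ is bounded above by $-\tau_{j+1}<0$, and the integrand of $\hat c^{(k)}_{j-1}$ is odd about the midpoint against an increasing kernel --- is also correct, and is somewhat more direct than the paper's integration-by-parts reformulations \eqref{eq:akj_rem}--\eqref{eq:ckj_rem}.

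The genuine gap is (P3), the only property where the mesh restriction \eqref{condition:rho} enters, and your proposal both leaves it unproven and misidentifies where the threshold comes from. The constant-kernel limit is as you say ($C_j\to 1$ for every $j$), but the $O(\theta)$ coefficient of $C_{j+1}[\theta]-C_j[\theta]$ equals
$\frac{1}{6}\bigl(\frac{\tau_j^3}{\tau_{j+1}(\tau_j+\tau_{j+1})}+\frac{(4\tau_j+3\tau_{j+1})\tau_j}{\tau_j+\tau_{j+1}}-\frac{\tau_{j-1}^3}{\tau_j(\tau_{j-1}+\tau_j)}\bigr)+\frac{\tau_{j+1}}{\tau_{j+1}+\tau_{j+2}}\bigl(\frac{\tau_{j+1}}{3}+\frac{\tau_{j+2}}{2}\bigr)$,
and because of the strictly positive last group (coming from the $\hat a^{(k)}_{j+1}$ block, whose limit is not $-1$ plus higher order but contributes at order $\theta$), its nonnegativity is a \emph{strictly weaker} requirement than $3\rho_j^2(1+\rho_j)\ge 1$: for instance, with $\rho_{j+1}=\rho_{j+2}=1$ it only requires $\rho_j^2(1+\rho_j)\ge 2/13$, and even the worst case over $\rho_{j+1},\rho_{j+2}$ requires only $\rho_j^2(1+\rho_j)\gtrsim 0.17$. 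So the cubic threshold $\eta$ cannot be recovered from the leading Taylor coefficient. In the paper it arises instead from a uniform-in-$\theta$ domination argument: after the regrouping \eqref{eq:gjk}, the single negative integral (with kernel $e^{-\theta(t_k^*-t_{j-1}+s\tau_{j-1})}$) is absorbed by a positive integral carrying the pointwise \emph{larger} kernel $e^{-\theta(t_k^*-t_j+s\tau_j)}$, which is possible precisely when $\frac{(4\tau_j+3\tau_{j+1})\tau_j}{\tau_j+\tau_{j+1}}\ge\frac{\tau_{j-1}^3}{\tau_j(\tau_{j-1}+\tau_j)}$, i.e.\ under \eqref{condition:rho}; the leftover sign-changing integral is then evaluated exactly via $\theta\tau_j\int_0^1(1-3s)(1-s)e^{-\theta s\tau_j}\,\ds=1-e^{-\theta\xi_j\tau_j}>0$. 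Your proposed bridge from small $x$ to all $x>0$ (``monotonicity of $\Psi$ in $x$'') is neither established nor evidently true, so as written (P3) --- and with it (P4) --- remains unproven; completing the argument requires a uniform-in-$\theta$ mechanism of the above type rather than an asymptotic one.
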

\begin{proof}
 We first provide two equivalent reformulations of $\hat{a}^{(k)}_{j}$ according to \eqref{eq:aj}: $\forall 1\leq j\leq k-1$,
 {\small
\begin{align}\label{eq:akj_rem}
        \hat{a}^{(k)}_{j}
        &= \sum_{\ell=1}^{N_{q}} \int_0^1 \frac{-2 \tau_j(1-s)-\tau_{j+1}}{\tau_{j}+\tau_{j+1}}\varpi^{\ell}e^{-\theta^{\ell}(t_k^*-(t_{j-1}+s \tau_j))}\,\ds\\
        &=\sum_{\ell=1}^{N_{q}}\frac{1}{\tau_{j}+\tau_{j+1}}\int_0^1 \varpi^{\ell}e^{-\theta^{\ell}(t_k^*-(t_{j-1}+s \tau_j))}\,{\rm d}( \tau_j s^2-(2\tau_j+\tau_{j+1})s)\nonumber\\
     &=\sum_{\ell=1}^{N_{q}} \varpi^{\ell}\left(-e^{-\theta^{\ell}(t_k^*-t_{j})}+\frac{\theta^{\ell}\tau_j}{\tau_j+\tau_{j+1}}
    \int_0^{1} (\tau_j+\tau_{j+1}+s\tau_j)
    (1-s)e^{-\theta^{\ell}(t_k^*-t_{j}+s \tau_j)}\,\ds\right)\nonumber
    \end{align}}
and
{\small
    \begin{align}\label{eq:akj1_rem}
        \hat{a}^{(k)}_{j}  &=\sum_{\ell=1}^{N_{q}} \int_0^1 \frac{-2 \tau_j(1-s)-\tau_{j+1}}{\tau_{j}+\tau_{j+1}}\varpi^{\ell}e^{-\theta^{\ell}(t_k^*-(t_{j-1}+s \tau_j))}\,\ds\\
        &=\sum_{\ell=1}^{N_{q}} \int_0^1 \frac{-2 \tau_j s-\tau_{j+1}}{(\tau_{j}+\tau_{j+1})}\varpi^{\ell}e^{-\theta^{\ell}(t_k^*-t_{j}+s \tau_j)}\,\ds\nonumber\\
        &=\sum_{\ell=1}^{N_{q}}\frac{1}{\tau_{j}+\tau_{j+1}}\int_0^1 \varpi^{\ell}e^{-\theta^{\ell}(t_k^*-t_j+s \tau_j)}\,{\rm d}( - \tau_j s^2-\tau_{j+1}s)\nonumber\\
    &=-\sum_{\ell=1}^{N_{q}} \varpi^{\ell}\left(e^{-\theta^{\ell}(t_k^*-t_{j-1})}+\frac{\theta^{\ell}\tau_j}{\tau_j+\tau_{j+1}}
    \int_0^{1}(\tau_j+\tau_{j+1}-s\tau_j)(1-s)
    e^{-\theta^{\ell}(t_k^*-t_{j-1}-s \tau_j)}\,\ds\right).\nonumber
    \end{align}}
    Furthermore, we also reformulate 
$\hat{c}^{(k)}_{j}$ in \eqref{eq:aj} as: $\forall 1\leq j\leq k-1$, 
\begin{align}\label{eq:ckj_rem}
   \hat{c}^{(k)}_{j}   & 
 =  \sum_{\ell=1}^{N_{q}}\int_0^1 \frac{\tau_j^2(2s-1)}{\tau_{j+1}(\tau_{j}+\tau_{j+1})}\varpi^{\ell}e^{-\theta^{\ell}(t_k^*-(t_{j-1}+s \tau_j))}\,\ds\nonumber\\
   &=\sum_{\ell=1}^{N_{q}}\frac{\tau_j^2}{\tau_{j+1}(\tau_{j}+\tau_{j+1})}\int_0^1\varpi^{\ell}e^{-\theta^{\ell}(t_k^*-(t_{j-1}+s \tau_j))} {\rm d}(s^2-s)\\
  &=\sum_{\ell=1}^{N_{q}} \frac{\varpi^{\ell}\theta^{\ell}\tau_{j}^3}{\tau_{j+1}(\tau_{j}+\tau_{j+1})}\int_0^{1} s(1-s)e^{-\theta^{\ell}(t_k^*-t_{j}+s \tau_j)}\  \ds.\nonumber
  \end{align}
From equations \eqref{eq:akj1_rem}--\eqref{eq:ckj_rem}, it is easy to verify $\hat{a}^{(k)}_{j}<0$ and  $\hat{c}^{(k)}_{j}>0$, which imply $\hat{d}^{(k)}_{j}=\hat{c}^{(k)}_{j-1}-\hat{a}^{(k)}_{j}>0$, i.e., (P1) holds (note that $\hat{c}^{(k)}_{0}=0$). Moreover, for any fixed $s$, we know that
 $ e^{-\theta^{\ell}(t_k^*-t_{j-1})}$, $ e^{-\theta^{\ell}(t_k^*-t_{j-1}-s \tau_j)}$, and $ e^{-\theta^{\ell}(t_k^*-t_{j}+s \tau_j)}$
decrease w.r.t. $k$. Combining this with  $\theta^{\ell}>0$ and  $\varpi^{\ell}>0$ stated in Lemma~\ref{lem:SOE_approx}, we can claim that 
$\hat{a}^{(k+1)}_{j}-\hat{a}^{(k)}_{j}>0$, $\hat{c}^{(k+1)}_{j}-\hat{c}^{(k)}_{j}<0$ and $\hat{d}^{(k+1)}_{j}-\hat{d}^{(k)}_{j}<0$, i.e., (P2) holds.

We now turn to prove the properties (P3) and (P4). From equations \eqref{eq:akj_rem}--\eqref{eq:ckj_rem}, we can derive
\begin{equation}\label{ineq:dkjdiff}
\begin{aligned}
    &\hat{d}^{(k)}_{j+1}-\hat{d}^{(k)}_{j}= (\hat{c}^{(k)}_{j}-\hat{c}^{(k)}_{j-1})-(\hat{a}_{j+1}^{(k)}-\hat{a}^{(k)}_{j})\\
    =&\sum_{\ell=1}^{N_{q}} \varpi^{\ell}\theta^{\ell}\bigg(G_{j,k}^{\ell}
    +\frac{\tau_{j+1}}{\tau_{j+1}+\tau_{j+2}}
     \int_0^{1}(\tau_{j+1}+\tau_{j+2}-s\tau_{j+1}) (1-s)e^{-\theta^{\ell}(t_k^*-t_{j}-s \tau_{j+1})}\,\ds\bigg),   \end{aligned}
\end{equation}
where 
\begin{equation}\label{eq:gjk}
\begin{aligned}
     G_{j,k}^{\ell}=&\frac{\tau_{j}^3}{\tau_{j+1}(\tau_{j}+\tau_{j+1})}\int_0^{1} s(1-s)e^{-\theta^{\ell}(t_k^*-t_{j}+s \tau_j)}\  \ds\\
    & -\frac{\tau_{j-1}^3}{\tau_{j}(\tau_{j-1}+\tau_{j})}\int_0^{1} s(1-s)e^{-\theta^{\ell}(t_k^*-t_{j-1}+s \tau_{j-1})}\  \ds\\
    &+ \frac{\tau_j}{\tau_j+\tau_{j+1}}
    \int_0^{1} (\tau_j+\tau_{j+1}+s\tau_j)
    (1-s)e^{-\theta^{\ell}(t_k^*-t_{j}+s \tau_j)}\,\ds\\
    =&\left(\frac{\tau_{j}^3}{\tau_{j+1}(\tau_{j}+\tau_{j+1})}+\frac{(4\tau_j+3\tau_{j+1})\tau_j}{\tau_j+\tau_{j+1}}\right)\int_0^{1} s(1-s)e^{-\theta^{\ell}(t_k^*-t_{j}+s \tau_j)}\  \ds\\
    &- \frac{\tau_{j-1}^3}{\tau_{j}(\tau_{j-1}+\tau_{j})}\int_0^{1} s(1-s)e^{-\theta^{\ell}(t_k^*-t_{j-1}+s \tau_{j-1})}\  \ds\\
    &+ e^{-\theta^{\ell}(t_k^*-t_{j})}\tau_j
     \int_0^{1} (1-3s)(1-s)e^{-\theta^{\ell}s \tau_j} \,\ds.
\end{aligned}
\end{equation}
and  we use the forms \eqref{eq:akj_rem} for $\hat{a}^{(k)}_{j}$ and \eqref{eq:akj1_rem} for $\hat{a}^{(k)}_{j+1}$. Here we make a convention $\tau_0=0$, so that $\hat{d}^{(k)}_{1}=-\hat{a}^{(k)}_{1}.$
Under the condition \eqref{condition:rho}, we have 
\begin{equation*}
\begin{aligned}
  \frac{(4\tau_j+3\tau_{j+1})\tau_j}{\tau_j+\tau_{j+1}}-\frac{\tau_{j-1}^3}{\tau_{j}(\tau_{j-1}+\tau_{j})}
  = \tau_j\left(\frac{1}{1+\rho_{j+1}}+\frac{3\rho_{j}^2(1+\rho_{j})-1}{\rho_{j}^2(1+\rho_{j})}\right)\ge 0.
\end{aligned}
\end{equation*} Combining this with the fact 
$
    e^{-\theta^{\ell}(t_k^*-t_{j}+s \tau_j)} >e^{-\theta^{\ell}(t_k^*-t_{j-1}+s \tau_{j-1})}
$
 and 
\begin{align*}
    &\theta^{\ell} \tau_{j}\int_0^{1} (1-3s)
    (1-s)e^{-\theta^{\ell}s \tau_{j}}\,\ds
    =\int_0^{1} (1-3s)
    (1-s)\left(-e^{-\theta^{\ell}s \tau_{j}}\right)'\,\ds\\
    =&1+\int_0^{1}(6s-4)e^{-\theta^{\ell}s \tau_{j}}\,\ds
        =1-e^{-\theta^{\ell}\xi_j^{\ell}\tau_j},
\end{align*}
where  $\xi_j^{\ell}$ is a constant in $(0,1)$,
we can find that 
\begin{equation}\label{eq:gjkineq}
    G_{j,k}^{\ell}>(\theta^{\ell})^{-1}e^{-\theta^{\ell}(t_k^*-t_j)}\left(1-e^{-\theta^{\ell}\xi_j^{\ell}\tau_j}\right)>0,\quad \forall j< k.
\end{equation}
Then we can derive from \eqref{ineq:dkjdiff} that
\begin{equation}\label{eq:P9}
     \hat{d}^{(k)}_{j+1}-\hat{d}^{(k)}_{j}
        >\sum_{\ell=1}^{N_{q}} \frac{\varpi^{\ell}\theta^{\ell}\tau_{j+1}}{\tau_{j+1}+\tau_{j+2}}
     \int_0^{1}(\tau_{j+1}+\tau_{j+2}-s\tau_{j+1}) (1-s)e^{-\theta^{\ell}(t_k^*-t_{j}-s \tau_{j+1})}\,\ds.
\end{equation}
Therefore, (P3) holds. Moreover,   the property (P4) holds follows from
{\small
 \begin{align*}
    & (\hat{d}^{(k)}_{j+1}-\hat{d}^{(k)}_{j})-(\hat{d}^{(k+1)}_{j+1}-\hat{d}^{(k+1)}_{j} )\\
    =&\sum_{\ell=1}^{N_{q}} \varpi^{\ell}\theta^{\ell}\left(1-e^{\theta^{\ell}(t_{k}^*-t_{k+1}^*)}\right)\\ &\qquad\bigg(G_{j,k}^{\ell}
   +\frac{\tau_{j+1}}{\tau_{j+1}+\tau_{j+2}}
     \int_0^{1}(\tau_{j+1}+\tau_{j+2}-s\tau_{j+1}) (1-s)e^{-\theta^{\ell}(t_k^*-t_{j}-s \tau_{j+1})}\,\ds\bigg)>0, 
 \end{align*}}
 according to  \eqref{ineq:dkjdiff}. 
\end{proof}
Now we can state and prove the following main theorem.
\begin{theorem}\label{thm1}
Consider a nonuniform mesh $\{\tau_k\}_{k\ge 1}$ satisfying that for $k\ge 2$
\begin{equation}\label{thm1cond}
\begin{aligned}
    \rho_k&\ge \eta \approx0.475329,
    &&\varepsilon\le \min _{k\ge 1}\frac{1}{5(1-\alpha)(\sigma\tau_k)^\alpha},\\
    \Delta t& \le \min_{k\ge 2}\sigma \tau_k, &&T_{\rm{soe}}\ge \max_{k\ge2 }(\sigma\tau_{k+1}+\tau_k),
\end{aligned}
\end{equation}
 where $\varepsilon$,  $\Delta t$, and $T_{\rm{soe}}$ are fixed in Lemma~\ref{lem:SOE_approx}, and $\sigma=1-\alpha/2$.
Then for any function $u$ defined on $[0,\infty)\times\Omega$ and $n\ge 1$,
\begin{equation*}\label{eq:PD}
    \mathcal B_n(u,u) = \sum_{k=1}^{n}\langle F_k^{\alpha,*} u, \delta_k u\rangle\geq \sum_{k=1}^{n} \frac{[\mathbf B]_{kk}}{\Gamma(1-\alpha)} \|\delta_k u\|^2\geq 0,
\end{equation*}
where  $\|\cdot\|$ is the $L^2$-norm and $\mathbf B$ is defined in \eqref{matrxB}.

\end{theorem}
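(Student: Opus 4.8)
The plan is to turn the bilinear form into an ordinary quadratic form in the increments $\delta_k u$ and then reduce the whole statement to a positive-semidefiniteness question for a real symmetric matrix. Using the last line of \eqref{eq:soeFk} together with $\Gamma(1-\alpha)/\Gamma(2-\alpha)=1/(1-\alpha)$, one has for each $k$
\begin{equation*}
\Gamma(1-\alpha)\,F_k^{\alpha,*}u=\Big(\hat c_{k-1}^{(k)}+\frac{\sigma^{1-\alpha}}{(1-\alpha)\tau_k^{\alpha}}\Big)\delta_k u+\sum_{j=1}^{k-1}\hat d_j^{(k)}\,\delta_j u ,
\end{equation*}
so, writing $g_j:=\delta_j u$,
\begin{equation*}
\Gamma(1-\alpha)\,\mathcal B_n(u,u)=\sum_{k=1}^{n}\Big(\hat c_{k-1}^{(k)}+\frac{\sigma^{1-\alpha}}{(1-\alpha)\tau_k^{\alpha}}\Big)\|g_k\|^2+\sum_{k=2}^{n}\sum_{j=1}^{k-1}\hat d_j^{(k)}\,\langle g_j,g_k\rangle .
\end{equation*}
I would read off the symmetric coefficient matrix $\mathbf S$ from this (diagonal the bracketed positive term, off-diagonal entries $\tfrac12\hat d^{(k)}_{j}$), so that formally $\Gamma(1-\alpha)\mathcal B_n(u,u)=\langle\mathbf S\,\mathbf g,\mathbf g\rangle$. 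Since $\langle\cdot,\cdot\rangle$ is a genuine inner product on $L^2(\Omega)$ and the increments $g_k=\delta_k u$ can be prescribed arbitrarily (take $u^0=0$, $u^k=\sum_{j\le k}g_j$), expanding the $g_k$ in an orthonormal basis collapses the inequality to the finite-dimensional Loewner comparison $\mathbf S\succeq \mathrm{diag}\big([\mathbf B]_{kk}\big)$, with $[\mathbf B]_{kk}$ from \eqref{matrxB}; this is the statement I would target.

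The heart of the proof is to exhibit $\langle(\mathbf S-\mathrm{diag}([\mathbf B]_{kk}))\mathbf g,\mathbf g\rangle$ as a manifestly nonnegative expression, using only the four properties of Lemma~\ref{lemmapro}. I would do this by two successive Abel (summation-by-parts) transformations. First, sum by parts in the column index $j$: the monotonicity (P3), $\hat d_{j+1}^{(k)}>\hat d_j^{(k)}$, lets me rewrite the history sum $\sum_{j<k}\hat d_j^{(k)}g_j$ through the partial sums $G_m:=\sum_{j\le m}g_j=u^m-u^0$ with nonnegative weights $\hat d_{j+1}^{(k)}-\hat d_j^{(k)}$. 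Second, sum by parts in the row index $k$: the monotonicity (P2) ($\hat d_j^{(k)}$ decreasing in $k$) and the convexity-type property (P4) (its first differences decreasing in $k$) turn the remaining double sum into nonnegative combinations of $\|G_m\|^2$ and of mixed squares $\|G_k-G_m\|^2$. Properties (P1)--(P4) are exactly what force every coefficient produced in this reorganization to be $\ge 0$; the positive local terms $\sigma^{1-\alpha}/((1-\alpha)\tau_k^{\alpha})$, which are not consumed by the transformations, are precisely what is extracted as the diagonal $[\mathbf B]_{kk}$, leaving a sum of squares. This yields simultaneously $\mathcal B_n(u,u)\ge\sum_k[\mathbf B]_{kk}\|\delta_k u\|^2/\Gamma(1-\alpha)$ and the final nonnegativity.

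I would then check that \eqref{thm1cond} supplies exactly the inputs used above. The ratio bound $\rho_k\ge\eta$ is the hypothesis \eqref{condition:rho} under which (P3)--(P4) hold, while (P1)--(P2) hold unconditionally. The bounds $\Delta t\le\min_{k\ge2}\sigma\tau_k$ and $T_{\rm{soe}}\ge\max_{k\ge2}(\sigma\tau_{k+1}+\tau_k)$ guarantee that every argument $t_k^*-s$ appearing in the integrals \eqref{eq:aj} defining $\hat a_j^{(k)}$, $\hat c_j^{(k)}$ lies in $[\Delta t,T_{\rm{soe}}]$, so that Lemma~\ref{lem:SOE_approx}—in particular the positivity of the nodes $\theta^\ell$ and weights $\varpi^\ell$—may be invoked throughout. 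Finally, the smallness of $\varepsilon$, namely $\varepsilon\le\min_k 1/(5(1-\alpha)(\sigma\tau_k)^\alpha)$, is used to control the SOE perturbation of the exact kernel so that the extracted diagonal weights $[\mathbf B]_{kk}$ remain nonnegative, which also gives the trailing ``$\ge 0$''.

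I expect the decisive difficulty to be the matrix step. Because $\hat d_j^{(k)}$ genuinely depends on both indices and the mesh is nonuniform, the generating-function/Fourier arguments available for convolution kernels on uniform meshes do not apply, and positivity must be extracted purely from the monotonicity and convexity encoded in (P1)--(P4). The delicate points are the bookkeeping of the boundary terms in the two Abel transformations and verifying that the interaction of (P2) and (P4) makes every cross-diagonal remainder nonnegative; a secondary difficulty is the quantitative handling of the SOE error, which is where the explicit threshold on $\varepsilon$ enters to preserve the sign of the diagonal weights.
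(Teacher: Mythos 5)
Your overall skeleton coincides with the paper's: rewrite $\mathcal B_n(u,u)$ as a quadratic form in $\psi=[\delta_1u,\dots,\delta_nu]$ with matrix $\mathbf M$, split $\mathbf M=\mathbf A+\mathbf B$ with $\mathbf B$ diagonal, prove positive semidefiniteness of the symmetrized part from the monotonicity/convexity of the $\hat d^{(k)}_j$, and prove $[\mathbf B]_{kk}\ge 0$ from the $\varepsilon$-condition. Your double Abel-summation plan for the first half is workable and is, in substance, a proof of the lemma the paper simply cites (\cite[Lemma 2.1]{CSIAM-AM-1-478}): passing to partial sums $G_m$ replaces $\mathbf S$ by its second-difference matrix, whose interior off-diagonal entries are $\le 0$ by the convexity property, whose boundary row/column entries are $\le 0$ by the column-monotonicity, and whose row sums are $\ge 0$ by the row-monotonicity, giving exactly the nonnegative combination of $\|G_m\|^2$ and $\|G_k-G_m\|^2$ you describe. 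However, you gloss over the one structural choice that makes this bookkeeping close: the diagonal of the PSD block cannot be ``whatever is left after removing the local terms''; it must be the specific $\beta_k$ of \eqref{betak}, $2\beta_k=\hat d^{(k+1)}_{k}+\bigl(\hat d^{(k)}_{k-1}-\hat d^{(k+1)}_{k-1}\bigr)$, chosen so that the convexity property (3) extends across the diagonal (it holds there with equality). A larger diagonal in the PSD block destroys the sign of the near-diagonal second differences; a smaller one makes $[\mathbf B]_{kk}\ge0$ false. Identifying \eqref{betak} is a genuine idea of the paper's proof, not a boundary-term detail.

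The more serious gap is that the second half of the theorem --- $[\mathbf B]_{kk}\ge 0$ under \eqref{thm1cond}, which is what produces the middle inequality and the trailing ``$\ge0$'' --- is asserted in one sentence but never proved. This is where every hypothesis in \eqref{thm1cond} does quantitative work, and it occupies roughly half of the paper's proof: one needs the integral representations \eqref{eq:akj_rem}--\eqref{eq:ckj_rem} to rewrite $\hat c^{(k)}_{k-1}-(\hat d^{(k)}_{k-1}-\hat d^{(k+1)}_{k-1}+\hat d^{(k+1)}_{k})$, discard the positive $G^{\ell}_{k-1,k}$ contribution, and then bound the remaining exponential sums by the SOE error estimate applied at the particular arguments $\sigma\tau_k$, $\sigma\tau_{k+1}$ and $\sigma\tau_{k+1}+\tau_k$ (as in \eqref{ineq:hbk}). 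This last step is the actual reason for requiring $\Delta t\le\min_{k\ge2}\sigma\tau_k$ and $T_{\rm{soe}}\ge\max_{k\ge2}(\sigma\tau_{k+1}+\tau_k)$: these conditions place those arguments inside $[\Delta t,T_{\rm{soe}}]$ where Lemma~\ref{lem:SOE_approx} controls the error. You instead attribute them to the positivity of $\varpi^\ell,\theta^\ell$, which is unconditional and is all that Lemma~\ref{lemmapro} needs; with that reading, these two hypotheses would appear superfluous. Finally, the proof must be closed by the scalar inequalities $f_1(\alpha)\ge 0.6$ and $f_2(\alpha)=1-(1-\alpha)\sigma^{\alpha}[(\sigma\eta)^{-\alpha}-(\sigma\eta+1)^{-\alpha}]\ge 0.6$ (verified numerically in the paper), which is precisely what links the threshold $\eta\approx 0.475329$ to the factor $5$ in the $\varepsilon$-condition, since then $\tfrac32\varepsilon\le\tfrac{3}{10(1-\alpha)(\sigma\tau_k)^{\alpha}}$ is exactly absorbed by $\tfrac{f_2(\alpha)}{2(1-\alpha)(\sigma\tau_k)^{\alpha}}$ in \eqref{ineq:Bkk}. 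None of this chain appears in your proposal, so as it stands the theorem's conclusion is not established.
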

\begin{proof}
According to \eqref{eq:soeFk}, we can rewrite $\mathcal B_n(u,u)$ in the following matrix form
\begin{equation}\label{matM}
    \mathcal B_n(u,u) = \sum_{k=1}^{n}\langle F_k^{\alpha,*} u, \delta_k u\rangle=\frac{1}{\Gamma(1-\alpha)}\int_{\Omega}\psi \mathbf M \psi^{\mathrm{T}}\dx,
\end{equation}
where 
$
    \psi=[\delta_{1} u,\delta_2 u,\cdots,\delta_n u]
$
and
$\mathbf M = \mathbf A+\mathbf B$ with
{\footnotesize
\begin{equation*}
    \mathbf A=
\begin{pmatrix}
\beta_1 & \\
\hat{d}^{(2)}_{1}& \beta_2  \\
\hat{d}^{(3)}_{1} & \hat{d}^{(3)}_{2}& \beta_3 \\
\vdots& \vdots & \ddots & \ddots\\
\hat{d}^{(n)}_{1}& \hat{d}^{(n)}_{2}&\cdots&  \hat{d}^{(n)}_{n-1}& \beta_n
\end{pmatrix}
\end{equation*}}
and
\begin{equation}\label{matrxB}
    \mathbf B={\rm diag}\left(\frac{\sigma^{1-\alpha}}{(1-\alpha)\tau_1^{\alpha}}-\beta_1,~\frac{\sigma^{1-\alpha}}{(1-\alpha)\tau_2^{\alpha}}+\hat{c}^{(2)}_{1}-\beta_2,~\cdots,~\frac{\sigma^{1-\alpha}}{(1-\alpha)\tau_n^{\alpha}}+\hat{c}^{(n)}_{n-1}-\beta_n\right)
\end{equation}
with
\begin{equation}\label{betak}
\begin{aligned}
    2\beta_1= \hat{d}^{(2)}_{1},\quad
    2\beta_k -\hat{d}^{(k+1)}_{k}=\hat{d}^{(k)}_{k-1}-\hat{d}^{(k+1)}_{k-1},\quad 2\leq k\leq n.
\end{aligned}
\end{equation}
Consider the following symmetric matrix 
$
    \mathbf S = \mathbf A+\mathbf A^{\mathrm T}.
$
According to  the properties (P1)--(P4) in Lemma \ref{lemmapro}, if the condition \eqref{thm1cond} holds, all the elements of $ \mathbf S$ are positive, and  $\mathbf S$ satisfies the following three properties:
\begin{itemize}
\item[{\rm (1)}] $\forall\; 1\leq j < i \leq n$, $\left[ \mathbf S \right]_{i-1,j}\geq \left[ \mathbf S \right]_{i, j}$;
\item[{\rm (2)}] $\forall\; 1 < j \leq i \leq n$, $\left[ \mathbf S \right]_{i, j-1}< \left[ \mathbf S \right]_{i, j}$;
\item[{\rm (3)}] $\forall \;1< j < i \leq n$, $\left[ \mathbf S \right]_{i-1, j-1} - \left[ \mathbf S \right]_{i, j-1}\leq \left[ \mathbf S \right]_{i-1, j} - \left[ \mathbf S \right]_{i, j}$.
\end{itemize}
From \cite[Lemma 2.1]{CSIAM-AM-1-478}, $\mathbf S=\mathbf A+\mathbf A^{\mathrm T}$ is positive semidefinite.

Next we will prove $[\mathbf B]_{kk}\ge0$, $k\ge 1$, under the condition \eqref{thm1cond}. When $k=1$, from the second equality in \eqref{eq:akj1_rem}, $2\beta_1=\hat{d}^{(2)}_{1}= -\hat{a}^{(2)}_1$ in \eqref{betak}, and Lemma~\ref{lem:SOE_approx}, we have
\begin{align}\label{ineq:B11}
    [\mathbf B]_{11}&=\frac{\sigma^{1-\alpha}}{(1-\alpha)\tau_1^{\alpha}}-\frac12\sum_{\ell=1}^{N_{q}} \int_0^1 \frac{2 s+\rho_2}{1+\rho_{2}}\varpi^{\ell}e^{-\theta^{\ell}(\sigma\tau_2+s \tau_1)}\,\ds\\
    &> \frac{\sigma^{1-\alpha}}{(1-\alpha)\tau_1^{\alpha}}-\frac12\sum_{\ell=1}^{N_{q}}\varpi^{\ell}e^{-\theta^{\ell}\sigma\tau_2}\int_0^1\frac{2s+\rho_2}{1+\rho_2}\ds
   =\frac{\sigma^{1-\alpha}}{(1-\alpha)\tau_1^{\alpha}}-\frac12\sum_{\ell=1}^{N_{q}}\varpi^{\ell}e^{-\theta^{\ell}\sigma\tau_2}\nonumber\\&\ge \frac{\sigma^{1-\alpha}}{(1-\alpha)\tau_1^{\alpha}}-\frac12(\sigma\tau_2)^{-\alpha}-\frac12\varepsilon\ge\frac{1}{2(1-\alpha)(\sigma\tau_1)^\alpha}\left( 2-\alpha-(1-\alpha)\eta^{-\alpha}\right)-\frac12\varepsilon.\nonumber
    \end{align}
  When $k\ge 2$,  from $2\beta_k -\hat{d}^{(k+1)}_{k}=\hat{d}^{(k)}_{k-1}-\hat{d}^{(k+1)}_{k-1}$ in \eqref{betak},  we have  
\begin{align} \label{eq:bkk}
     [\mathbf B]_{kk}
    =& \frac{\sigma^{1-\alpha}}{(1-\alpha)\tau_k^{\alpha}}+\frac12\hat{c}^{(k)}_{k-1}+\frac12\left[\hat{c}^{(k)}_{k-1}-(\hat{d}^{(k)}_{k-1}-\hat{d}^{(k+1)}_{k-1}+\hat{d}^{(k+1)}_{k})\right].
\end{align}
From the equations \eqref{eq:akj_rem}--\eqref{eq:ckj_rem} and $\hat{d}^{(k)}_{j}=\hat{c}^{(k)}_{j-1}-\hat{a}^{(k)}_{j}$, when the condition \eqref{thm1cond} holds, we can derive for $k\ge 2$
\begin{equation}
 \begin{aligned}
 &\hat{c}^{(k)}_{k-1}-(\hat{d}^{(k)}_{k-1}-\hat{d}^{(k+1)}_{k-1}+\hat{d}^{(k+1)}_{k})\\
    =&(\hat{c}^{(k)}_{k-1}-\hat{c}^{(k+1)}_{k-1})-(\hat{c}^{(k)}_{k-2}-\hat{c}^{(k+1)}_{k-2} )+(\hat{a}^{(k)}_{k-1}-\hat{a}^{(k+1)}_{k-1}+\hat{a}^{(k+1)}_{k})\\
    =&\sum_{\ell=1}^{N_{q}} \varpi^{\ell}\theta^{\ell}\left(1-e^{\theta^{\ell}(t_{k}^*-t_{k+1}^*)}\right)G_{k-1,k}^{\ell}
    -\sum_{\ell=1}^{N_{q}}  \varpi^{\ell}e^{-\theta^{\ell}(\sigma\tau_k)}\\&-\sum_{\ell=1}^{N_{q}} \frac{\varpi^{\ell}\theta^{\ell}\tau_{k}}{\tau_{k}+\tau_{k+1}}
     \int_0^{1}(\tau_k+\tau_{k+1}-s\tau_{k}) (1-s)e^{-\theta^{\ell}(t_{k+1}^*-t_{k-1}-s \tau_{k})}\,\ds \\
   \ge &-\sum_{\ell=1}^{N_{q}}  \varpi^{\ell}e^{-\theta^{\ell}(\sigma\tau_k)}-\sum_{\ell=1}^{N_{q}} \frac{\varpi^{\ell}\theta^{\ell}\tau_k}{\tau_k+\tau_{k+1}}
     \int_0^{1}(\tau_{k+1}+s\tau_{k}) s e^{-\theta^{\ell}(\sigma\tau_{k+1}+s\tau_{k})}\,\ds,
    \end{aligned}
\end{equation}
    where $G_{k-1,k}^{\ell}>0$ is defined in \eqref{eq:gjk} and we use  \eqref{eq:akj_rem} for $\hat{a}^{(k)}_{k-1}$ and  $\hat{a}^{(k+1)}_{k-1}$, and \eqref{eq:akj1_rem} for $\hat{a}^{(k+1)}_{k}$.
     Moreover, from Lemma~\ref{lem:SOE_approx}, we have
     \begin{align}\label{ineq:hbk}
 &\sum_{\ell=1}^{N_{q}}  \varpi^{\ell}e^{-\theta^{\ell}\sigma\tau_k}+ \sum_{\ell=1}^{N_{q}} \frac{\varpi^{\ell}\theta^{\ell}\tau_k}{\tau_k+\tau_{k+1}}
     \int_0^{1}(\tau_{k+1}+s\tau_{k}) s e^{-\theta^{\ell}(\sigma\tau_{k+1}+s\tau_{k})}\,\ds\\
     \le&\sum_{\ell=1}^{N_{q}}  \varpi^{\ell}\left(e^{-\theta^{\ell}\sigma\tau_k}+ \theta^{\ell}\tau_k
     \int_0^{1}  e^{-\theta^{\ell}(\sigma\tau_{k+1}+s\tau_{k})}\,\ds\right)\nonumber\\=&\sum_{\ell=1}^{N_{q}}  \varpi^{\ell}\left(e^{-\theta^{\ell}\sigma\tau_k}+ \int_0^{1}  (-e^{-\theta^{\ell}(\sigma\tau_{k+1}+s\tau_{k})})'\,\ds\right)\nonumber\\
     =&\sum_{\ell=1}^{N_{q}}  \varpi^{\ell}\left(e^{-\theta^{\ell}\sigma\tau_k}+ e^{-\theta^{\ell}\sigma\tau_{k+1}}-e^{-\theta^{\ell}(\sigma\tau_{k+1}+\tau_k)}\right)\nonumber\\
     \le&(\sigma\tau_k)^{-\alpha}+(\sigma\tau_{k+1})^{-\alpha}-(\sigma\tau_{k+1}+\tau_k)^{-\alpha}+3\varepsilon \nonumber
\end{align}
    Combining \eqref{eq:bkk}--\eqref{ineq:hbk}, $\sigma=1-\alpha/2$  and the fact $\hat{c}^{(k)}_{k-1}>0$, we can derive  
     \begin{equation}\label{ineq:Bkk}
     \begin{aligned}
      [\mathbf B]_{kk} 
     \ge& \frac{\sigma^{1-\alpha}}{(1-\alpha)\tau_k^{\alpha}}-\frac12(\sigma\tau_k)^{-\alpha}-\frac12(\sigma\tau_{k+1})^{-\alpha}+\frac12(\sigma\tau_{k+1}+\tau_k)^{-\alpha}-\frac32\varepsilon\\
    =&\frac{1}{2(1-\alpha)(\sigma\tau_k)^\alpha}\left(1-(1-\alpha)\sigma^{\alpha}[(\sigma\rho_{k+1})^{-\alpha}-(\sigma\rho_{k+1}+1)^{-\alpha}]\right)-\frac32\varepsilon\\
    \ge&\frac{1}{2(1-\alpha)(\sigma\tau_k)^\alpha}\left(1-(1-\alpha)\sigma^{\alpha}[(\sigma\eta)^{-\alpha}-(\sigma\eta+1)^{-\alpha}]\right)-\frac32\varepsilon.
     \end{aligned}
     \end{equation}
We now plot the following functions in Figure \ref{fig:valuef12}
\begin{equation*}
   f_1(\alpha)= 2-\alpha-(1-\alpha)\eta^{-\alpha},\quad f_2(\alpha)=1-(1-\alpha)\sigma^{\alpha}[(\sigma\eta)^{-\alpha}-(\sigma\eta+1)^{-\alpha}],
\end{equation*}
where $\eta \approx 0.475329$ from \eqref{condition:rho}.
We can see that $f_i(\alpha)\ge0.6$, for $i=1,\ 2$.  Thus  $[\mathbf B]_{kk}\ge 0$, $k\ge 1$, under the condition \eqref{thm1cond}. 
     \begin{figure}
    \centering
    \includegraphics[width=0.45\textwidth]{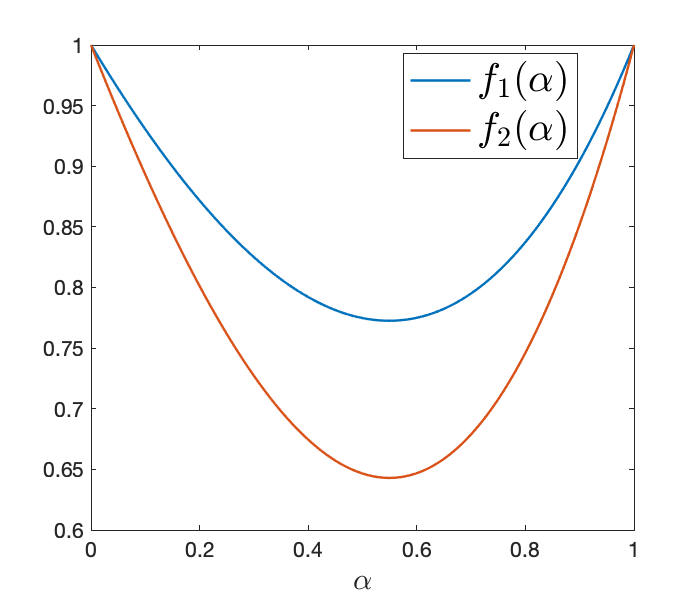}
   \vspace{-0.2in}    
    \caption{$f_1(\alpha)$ and $f_2(\alpha)$ w.r.t. $\alpha\in (0,1)$.}
    \label{fig:valuef12}
\end{figure}

\end{proof}

\section{Long time $H^1$-stability of the fast  L2-1$_\sigma$ scheme}\label{sect4}
\subsection{Linear subdiffusion equation}
For simplicity, we first consider the following linear subdiffusion equation:
\begin{equation}\label{eq:subdiffusion}
\begin{aligned}
    \partial_t^\alpha u(t,x) & = \Delta u(t,x)+f(t,x),&& (t,x)\in (0,\infty)\times\Omega,\\
    u(t,x) &= 0,&& (t,x)\in (0,\infty)\times\partial\Omega,\\
    u(0,x)& = u^0(x),&& x\in \Omega,
\end{aligned}    
\end{equation}
where $\Omega$ is a bounded Lipschitz domain in $\mathbb R^d$. 
Given an arbitrary nonuniform mesh $\{\tau_k\}_{k\geq 1}$, the fast L2-$1_\sigma$ scheme of this subdiffusion equation is written as
\begin{equation}\label{eq:sch_sub}
\begin{aligned}
    F_k^{\alpha,*} u 
    &= (1-\alpha/2)\Delta u^k+\alpha/2\Delta u^{k-1}+f(t_k^*,\cdot),&&  \text{in}~ \Omega,\\
    u^k&=0,&&\text{on} ~\partial\Omega.
\end{aligned}    
\end{equation}
\begin{theorem}\label{thm2}
Assume that $f(t,x) \in L^\infty([0,\infty);L^2(\Omega)) \cap BV([0,\infty); L^2(\Omega))$ is a bounded variation function in time and $ u^0\in H_0^1(\Omega)$.
Fix $\varepsilon$,  $\Delta t$, and $T_{\rm{soe}}$ in Lemma~\ref{lem:SOE_approx}.
For an arbitrary nonuniform mesh $\{\tau_k\}_{k\geq 1}$ satisfying \eqref{thm1cond}, then the numerical solution $u^n$ of the fast  L2-$1_\sigma$ scheme~\eqref{eq:sch_sub} satisfies the following global-in-time  $H^1$-stability
\begin{equation*}
\begin{aligned}
    \|\nabla u^n\|
   &\le\|\nabla u^0\| +2C_fC_{\Omega}, \quad \forall n\geq 1,
\end{aligned}
\end{equation*}
where  $C_{f}= 2\|f\|_{L^\infty([0,\infty);L^2(\Omega))}+\|f\|_{BV([0,\infty); L^2(\Omega))}$, and $C_\Omega$ is the Sobolev embedding constant depending on $\Omega$ and the spatial dimension $d$.
\end{theorem}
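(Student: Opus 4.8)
The plan is to run a discrete energy estimate by testing the scheme against the increment $\delta_k u = u^k - u^{k-1}$, so that the fractional-derivative contribution is precisely the bilinear form $\mathcal B_n$ that Theorem~\ref{thm1} already controls. First I would take the $L^2(\Omega)$ inner product of the scheme \eqref{eq:sch_sub} with $\delta_k u$ and sum over $k=1,\dots,n$. The left-hand side becomes exactly $\mathcal B_n(u,u) = \sum_{k=1}^n\langle F_k^{\alpha,*}u,\delta_k u\rangle$, which is nonnegative by Theorem~\ref{thm1} under condition \eqref{thm1cond}. The diffusion term is $\langle \sigma\Delta u^k + (1-\sigma)\Delta u^{k-1},\delta_k u\rangle$; since $\delta_k u\in H_0^1(\Omega)$, Green's identity rewrites it as $-\langle \nabla u^{*,k},\nabla\delta_k u\rangle$, where $u^{*,k}\coloneqq\sigma u^k + (1-\sigma)u^{k-1}$.

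The key algebraic step is to expand this expression. Using $2\langle a,b\rangle = \|a\|^2 + \|b\|^2 - \|a-b\|^2$ together with the defining value $\sigma = 1-\alpha/2$ (so that $1-2\sigma = -(1-\alpha)$), one obtains
\begin{equation*}
\langle\nabla u^{*,k},\nabla\delta_k u\rangle = \tfrac12\bigl(\|\nabla u^k\|^2 - \|\nabla u^{k-1}\|^2\bigr) + \tfrac{1-\alpha}{2}\|\nabla\delta_k u\|^2 .
\end{equation*}
The first part telescopes upon summation and the second is nonnegative for $\alpha\in(0,1)$. Combining this with $\mathcal B_n(u,u)\ge 0$ and discarding the nonnegative $\|\nabla\delta_k u\|^2$ terms yields
\begin{equation*}
\tfrac12\|\nabla u^n\|^2 \le \tfrac12\|\nabla u^0\|^2 + \sum_{k=1}^n\langle f(t_k^*),\delta_k u\rangle .
\end{equation*}

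The main obstacle is then to bound the source contribution \emph{uniformly in time}: a direct estimate of $\sum_k\|\delta_k u\|$ is unavailable and would anyway grow with $n$. Here I would apply summation by parts (Abel summation) to transfer the difference off $u$, producing boundary terms $\langle f(t_n^*),u^n\rangle$ and $\langle f(t_1^*),u^0\rangle$ and an interior sum $\sum_k\langle f(t_k^*)-f(t_{k+1}^*),u^k\rangle$. Bounding each inner product by Cauchy--Schwarz, invoking the Poincaré--Sobolev inequality $\|u^k\|\le C_\Omega\|\nabla u^k\|$, and using that the nodes $t_k^*$ are increasing so that $\sum_k\|f(t_{k+1}^*)-f(t_k^*)\|\le\|f\|_{BV([0,\infty);L^2)}$ while $\|f(t_k^*)\|\le\|f\|_{L^\infty([0,\infty);L^2)}$, the whole source term is dominated by $C_\Omega C_f M_n$, where $M_n\coloneqq\max_{0\le j\le n}\|\nabla u^j\|$ and $C_f = 2\|f\|_{L^\infty} + \|f\|_{BV}$.

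Finally I would close the estimate. Since the bound holds for every $n$, taking the maximum over $n\le N$ gives the quadratic inequality $\tfrac12 M_N^2 \le \tfrac12\|\nabla u^0\|^2 + C_\Omega C_f M_N$ in the single unknown $M_N$. Solving it and using $\sqrt{a^2+b^2}\le a+b$ for $a,b\ge 0$ produces $M_N \le \|\nabla u^0\| + 2C_\Omega C_f$, a bound independent of $N$. This gives the claimed global-in-time estimate $\|\nabla u^n\|\le\|\nabla u^0\| + 2C_f C_\Omega$ for all $n\ge 1$.
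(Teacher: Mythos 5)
Your proposal is correct and is essentially the argument the paper intends: the paper omits the details, deferring to the positive semidefiniteness of $\mathcal B_n$ from Theorem~\ref{thm1} and to its earlier cited work, and that intended route is exactly yours --- test \eqref{eq:sch_sub} with $\delta_k u$, telescope the diffusion term using $\sigma=1-\alpha/2$ so that the cross term carries the sign $-(1-\alpha)/2$, Abel-sum the source term together with Poincar\'e's inequality, and close with the quadratic inequality in $M_n$. The very form of the constant $C_f = 2\|f\|_{L^\infty([0,\infty);L^2(\Omega))}+\|f\|_{BV([0,\infty);L^2(\Omega))}$ (two boundary terms plus the total-variation sum) and of the final bound $\|\nabla u^0\|+2C_fC_\Omega$ (from $\sqrt{a^2+b^2}\le a+b$) confirms these are precisely the steps behind the stated result.
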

\begin{proof}
Due to the positive semidefiniteness of bilinear form $\mathcal B_n$ established in Theorem \ref{thm1}, the proof is quite similar as the one of \cite[Theorem 4.1]{quan2022energy}. We omit it here.
\end{proof}

\subsection{Semilinear subdiffusion equation}
In this part, we consider the semilinear subdiffusion  equation
\begin{equation} \label{eq:nonlinear}
    \begin{aligned}
        \partial_t^\alpha u(t,x)& = \nu^2\Delta u(t,x) -f(u(t,x)),\quad &&(t,x)\in (0,\infty)\times \Omega,\\
         u(0,x)& = u^0(x),&& x\in \Omega,
    \end{aligned}
\end{equation}
where $\nu >0$ is the  diffusion coefficient, $f$ is some nonlinear functional of $u(t,x)$, and $\Omega$ is a bounded Lipschitz domain in $\mathbb R^d$. 
For simplicity, we consider the periodic boundary condition or the homogeneous Dirichlet/Neumann boundary condition and assume that 
\begin{equation}\label{eq:lipschitz}
    \sup_{u\in\mathbb R} |f'(u)|\leq L
\end{equation}
 for some constant $L> 0$. One of such models is the time fractional phase-field model, which is revealed to admit different dynamical scales \cite{tangwangyang}.
 
We consider the following scheme for the  semilinear subdiffusion equation \eqref{eq:nonlinear} in $\Omega$, using the fast L2-$1_\sigma$ formula for Caputo derivative and the Newton linearization for nonlinearity:
\begin{equation}\label{eq:sch_subnonlinear}
    F_k^{\alpha,*} u 
    = (1-\alpha/2) \nu^2\Delta u^k+(\alpha/2)\nu^2\Delta u^{k-1}-f(u^{k-1})-(1-\alpha/2) f'(u^{k-1})\delta_ku.
\end{equation}
\begin{theorem}\label{thm:nonlinear}
Assume that $f$ in \eqref{eq:nonlinear} satisfies \eqref{eq:lipschitz}. 
Fix $\varepsilon$,  $\Delta t$, and $T_{\rm{soe}}$ in Lemma~\ref{lem:SOE_approx}.
For an arbitrary nonuniform mesh $\{\tau_k\}_{k\geq 1}$ satisfying \eqref{thm1cond}
and
\begin{equation}\label{cond:tau}
\begin{aligned}
&\tau_1^\alpha \leq \frac{2-\alpha-(1-\alpha)\eta^{-\alpha}}{\sigma^\alpha [(3-\alpha)\Gamma(2-\alpha)L+(1-\alpha)\varepsilon]}, && \\
 &   \tau_k^\alpha\leq \frac{1-(1-\alpha)\sigma^{\alpha}[(\sigma\eta)^{-\alpha}-(\sigma\eta+1)^{-\alpha}]}{\sigma^\alpha [(3-\alpha)\Gamma(2-\alpha)L+3(1-\alpha)\varepsilon]},&& k\geq 2,
    \end{aligned}
\end{equation}
where $\sigma=1-\alpha/2$, then the numerical solution $u^n$ of the fast L2-1$_\sigma$ scheme  \eqref{eq:sch_subnonlinear} satisfies the following energy stability: $\forall n\ge 1$,
\begin{equation*}
\begin{aligned}
  \int_{\Omega}\left(\frac{\nu^2}{2}|\nabla u^n|^2+F(u^n)\right)\ \dx\le \int_{\Omega}\left(\frac{\nu^2}{2}|\nabla u^0|^2+F(u^0)\right)\ \dx,
\end{aligned}
\end{equation*}
where $F$ is a primitive integral of $f$, i.e., $F'(u) = f(u)$. 
\end{theorem}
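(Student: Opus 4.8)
The plan is to test the scheme \eqref{eq:sch_subnonlinear} against the increment $\delta_k u = u^k-u^{k-1}$ in $L^2(\Omega)$, convert every term into a telescoping energy difference plus a controllable quadratic remainder, sum over $k$, and then invoke the positive semidefiniteness of $\mathcal B_n$ from Theorem~\ref{thm1} to absorb those remainders. Writing $\sigma=1-\alpha/2$ and denoting the discrete energy
\[
E^k := \int_{\Omega}\Big(\tfrac{\nu^2}{2}|\nabla u^k|^2+F(u^k)\Big)\dx,
\]
the goal is to produce a per-step inequality of the form $E^k-E^{k-1}\le -\langle F_k^{\alpha,*}u,\delta_k u\rangle + \tfrac{3-\alpha}{2}L\|\delta_k u\|^2$, after which summation and Theorem~\ref{thm1} close the argument provided the stepsize conditions \eqref{cond:tau} hold.

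First I would treat the diffusion part. Since its right-hand side reads $\nu^2\langle \sigma\Delta u^k+(1-\sigma)\Delta u^{k-1},\delta_k u\rangle$, integration by parts (legitimate for the periodic or homogeneous Dirichlet/Neumann conditions) gives $-\nu^2\langle \sigma\nabla u^k+(1-\sigma)\nabla u^{k-1},\nabla\delta_k u\rangle$, and the elementary identity
\[
\langle \sigma a+(1-\sigma)b,\,a-b\rangle = \tfrac12(\|a\|^2-\|b\|^2)+(\sigma-\tfrac12)\|a-b\|^2,
\]
applied with $a=\nabla u^k$, $b=\nabla u^{k-1}$ and $\sigma-\tfrac12=\tfrac{1-\alpha}{2}>0$, yields $-\tfrac{\nu^2}{2}(\|\nabla u^k\|^2-\|\nabla u^{k-1}\|^2)-\nu^2\tfrac{1-\alpha}{2}\|\nabla\delta_k u\|^2$; the last term is nonpositive and will simply be discarded. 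For the nonlinear part I would use the pointwise second-order Taylor expansion $F(u^k)=F(u^{k-1})+f(u^{k-1})\delta_k u+\tfrac12 f'(\xi_k)(\delta_k u)^2$ with $\xi_k$ between $u^{k-1}$ and $u^k$, so that $-\langle f(u^{k-1}),\delta_k u\rangle=-\int_\Omega(F(u^k)-F(u^{k-1}))\dx+\tfrac12\int_\Omega f'(\xi_k)(\delta_k u)^2\dx$. Adding the Newton term $-\sigma\langle f'(u^{k-1})\delta_k u,\delta_k u\rangle$ and using $|f'|\le L$ from \eqref{eq:lipschitz}, the nonlinear contribution telescopes into $-(\int_\Omega F(u^k)\dx-\int_\Omega F(u^{k-1})\dx)$ plus a remainder bounded by $(\tfrac12+\sigma)L\|\delta_k u\|^2=\tfrac{3-\alpha}{2}L\|\delta_k u\|^2$.

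Collecting these and summing over $k=1,\dots,n$ produces, after dropping the nonpositive diffusion remainder,
\[
E^n-E^0 \le -\sum_{k=1}^n\langle F_k^{\alpha,*}u,\delta_k u\rangle + \frac{3-\alpha}{2}L\sum_{k=1}^n\|\delta_k u\|^2.
\]
Theorem~\ref{thm1} gives $\sum_{k}\langle F_k^{\alpha,*}u,\delta_k u\rangle\ge \sum_{k}\frac{[\mathbf B]_{kk}}{\Gamma(1-\alpha)}\|\delta_k u\|^2$, whence
\[
E^n-E^0\le \sum_{k=1}^n\left(\frac{3-\alpha}{2}L-\frac{[\mathbf B]_{kk}}{\Gamma(1-\alpha)}\right)\|\delta_k u\|^2.
\]
It then suffices to verify $[\mathbf B]_{kk}\ge \tfrac{3-\alpha}{2}\Gamma(1-\alpha)L$ for every $k$. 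Using $\Gamma(1-\alpha)=\Gamma(2-\alpha)/(1-\alpha)$ together with the lower bounds on $[\mathbf B]_{11}$ and $[\mathbf B]_{kk}$ ($k\ge2$) already obtained in \eqref{ineq:B11} and \eqref{ineq:Bkk}, a direct rearrangement shows that these inequalities are \emph{exactly} the stepsize restrictions \eqref{cond:tau}, so each summand is nonpositive and $E^n\le E^0$ follows.

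The main obstacle is precisely this last matching step: translating the uniform requirement $[\mathbf B]_{kk}\ge\tfrac{3-\alpha}{2}\Gamma(1-\alpha)L$ into the sharp explicit conditions \eqref{cond:tau}. This relies on carrying the delicate estimates behind \eqref{ineq:B11}--\eqref{ineq:Bkk}, in particular the positivity $G^\ell_{k-1,k}>0$ from \eqref{eq:gjk} in Lemma~\ref{lemmapro} and the careful SOE error accounting of the $\tfrac12\varepsilon$ and $\tfrac32\varepsilon$ corrections from Lemma~\ref{lem:SOE_approx}, so that the numerators $2-\alpha-(1-\alpha)\eta^{-\alpha}$ and $1-(1-\alpha)\sigma^\alpha[(\sigma\eta)^{-\alpha}-(\sigma\eta+1)^{-\alpha}]$ in \eqref{cond:tau} coincide with the constants controlling $[\mathbf B]_{kk}$ from below; the remainder of the proof is routine once this correspondence is established.
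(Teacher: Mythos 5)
Your proposal is correct and follows essentially the same route as the paper's own proof: testing against $\delta_k u$, telescoping the diffusion and nonlinear terms into $E^{k-1}-E^k$ with a remainder bounded by $\tfrac{3-\alpha}{2}L\|\delta_k u\|^2$, summing, invoking the lower bound $\mathcal B_n(u,u)\ge \sum_k \frac{[\mathbf B]_{kk}}{\Gamma(1-\alpha)}\|\delta_k u\|^2$ from Theorem~\ref{thm1}, and then checking via \eqref{ineq:B11}--\eqref{ineq:Bkk} that \eqref{cond:tau} is precisely the condition $[\mathbf B]_{kk}\ge \tfrac{3-\alpha}{2}\Gamma(1-\alpha)L$. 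Your final "matching step" does indeed rearrange exactly (using $\Gamma(2-\alpha)=(1-\alpha)\Gamma(1-\alpha)$), so no gap remains.
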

\begin{proof}
Fix $n$ and let  
$
    E^n\coloneqq\int_{\Omega}\left(\frac{\nu^2}{2}|\nabla u^n|^2+F(u^n)\right)\dx.
$ 
Multiplying \eqref{eq:sch_subnonlinear} by $\delta_k u$ and integrating over $\Omega$ yield
\begin{equation*}
\begin{aligned}
&\langle F_k^{\alpha,*} u, \delta_k u \rangle\\=&
    -\frac{\nu^2}{2}(\|\nabla u^k\|^2-\|\nabla u^{k-1}\|^2+(1-\alpha)\|\nabla \delta_k u\|^2)\\& -\langle F(u^k)- F(u^{k-1}),1\rangle +\frac12\langle f'(\omega)\delta_k u,\delta_k u\rangle
 -(1-\alpha/2) \langle f'(u^{k-1})\delta_k u,\delta_k u\rangle\\
   =&-E^k+E^{k-1}-\frac{\nu^2(1-\alpha)}{2}\|\nabla \delta_k u\|^2+\frac12\langle (f'(\omega)-2(1-\alpha/2) f'(u^{k-1}))\delta_k u,\delta_k u\rangle\\
   \le& -E^k+E^{k-1}+\frac{(3-\alpha)L}{2}\|\delta_k u\|^2,
   \end{aligned}    
\end{equation*}
where $\omega$ is between $u^k$ and $u^{k-1}$. 
Summing up the above inequality over $k$ yields
\begin{equation*}\label{eq:sch_implicitenergy}
    \sum_{k=1}^n\langle F_k^\alpha u, \delta_k u \rangle \le -E^n+E^0+ \frac{(3-\alpha)L}{2}\sum_{k=1}^n\|\delta_k u\|^2.
\end{equation*}
From Theorem \ref{thm1}, we have 
\begin{equation*}
    E^n\le E^0 -\sum_{k=1}^n
    \left(\frac{[\mathbf B]_{kk}}{\Gamma(1-\alpha)}-\frac{(3-\alpha)L}{2}\right)\|\delta_k u\|^2.
\end{equation*}
According to \eqref{ineq:B11} and \eqref{ineq:Bkk}, under the condition
\eqref{cond:tau},
we have 
\begin{equation*}
    \frac{[\mathbf B]_{kk}}{\Gamma(1-\alpha)}-\frac{(3-\alpha)L}{2}\geq 0,\quad 1\leq k\leq n
\end{equation*}
and consequently $E^n\le E^0$ for any $n\ge 1$.
\end{proof}

\section{Sharp finite time $H^1$-error estimate} \label{sect5}
In this part, we present the sharp $H^1$-error estimate of the fast L2-1$_\sigma$ scheme \eqref{eq:sch_sub} for the linear subdiffusion equation \eqref{eq:subdiffusion} in finite time  $t\in(0,T_{\rm{soe}}]$ with some constraints on the time step. We consider time meshes  $\{\tau_k\}_{k=1}^N$:
\begin{equation}\label{mesh}
   0=t_0<t_1<\ldots<t_k\ldots<t_N=T_{\rm{soe}},\quad \text{with}\quad\tau_k = t_k-t_{k-1},
\end{equation}
where $T_{\rm{soe}}$ is given in Lemma~\ref{lem:SOE_approx}. 
We first reformulate the discrete formula \eqref{eq:soeFk}:
\begin{equation*}\label{eq:equa}
    F^{\alpha,*}_k u=\frac{1}{\Gamma(1-\alpha)}\left( [\mathbf M]_{k,k} u^k - \sum_{j=2}^k([\mathbf M]_{k,j}-[\mathbf M]_{k,j-1}) u^{j-1}-[\mathbf M]_{k,1}u^0\right),
\end{equation*}
where $\mathbf M$ is given by \eqref{matM}.
We now give some properties on $[\mathbf M]_{k,j}$.
\begin{lemma}\label{proMkj}
Given a nonuniform mesh  $\{\tau_k\}_{k=1}^N$ in \eqref{mesh} satisfying 
\begin{equation}\label{thm2cond}
\begin{aligned}
    \rho_k\ge \eta \approx0.475329,\quad\Delta t\le \min_{2\le k\le N}\sigma \tau_k,\quad
    \varepsilon\le\min_{2\le k\le N}\frac{1-\alpha}{\sigma}\hat{c}^{(k)}_{k-1},
\end{aligned}
\end{equation}
where   $\varepsilon,\ \Delta t$ are fixed in Lemma~\ref{lem:SOE_approx} and $\sigma=1-\alpha/2$, then following properties of $[\mathbf M]_{k,j}$ given by \eqref{matM} hold:
\begin{itemize}
\item[{\rm (Q1)}] for $1\le j\le k-1,$
{\small
\begin{equation*}
    [\mathbf M]_{k,j}\ge \frac{\eta}{(1+\eta)\tau_j}\sum_{\ell=1}^{N_{q}} \int_{t_{j-1}}^{t_j} \varpi^{\ell}e^{-\theta^{\ell}(t_k^*-s)}\,\ds,\quad  [\mathbf M]_{k,k}\ge\frac{1}{\tau_k}\int_{t_{k-1}}^{t_k^*}(t_k^*-s)^{-\alpha}\ds;
\end{equation*}}
\item[{\rm (Q2)}] 
for $2\le j\le k-1$, 
{\small
\begin{align*}
 [\mathbf M]_{k,j}-[\mathbf M]_{k,j-1}&\ge    \sum_{\ell=1}^{N_{q}} \frac{\varpi^{\ell}\theta^{\ell}\tau_{j}}{\tau_{j}+\tau_{j+1}}
     \int_0^{1}(\tau_{j}+\tau_{j+1}-s\tau_{j}) (1-s)e^{-\theta^{\ell}(t_k^*-t_{j-1}-s \tau_{j})}\,\ds,\\
      [\mathbf M]_{k,k}-[\mathbf M]_{k,k-1}&\ge\frac{\alpha}{2(1-\alpha)(\sigma\tau_{k})^{\alpha}}-\varepsilon;
\end{align*}}
\item[{\rm (Q3)}] 
{\small
\begin{equation*}
       \frac{1-\alpha}{\sigma}[\mathbf M]_{k,k}-[\mathbf M]_{k,k-1}\ge 0, \quad k\ge 2. 
\end{equation*}}
\end{itemize}
\end{lemma}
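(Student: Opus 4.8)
The plan is to first make the entries of $\mathbf M$ explicit. Reading off the matrix form \eqref{matM}, one has $[\mathbf M]_{k,j}=\hat{d}^{(k)}_j=\hat{c}^{(k)}_{j-1}-\hat{a}^{(k)}_j$ for $1\le j\le k-1$ and $[\mathbf M]_{k,k}=\hat{c}^{(k)}_{k-1}+\sigma^{1-\alpha}/((1-\alpha)\tau_k^\alpha)$, with the convention $\hat{c}^{(k)}_0=0$. With these formulas, all three properties reduce to estimates on the coefficients $\hat{a}^{(k)}_j,\hat{c}^{(k)}_j$ already reformulated in \eqref{eq:akj_rem}--\eqref{eq:ckj_rem}. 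For (Q1) I would discard the nonnegative term $\hat{c}^{(k)}_{j-1}\ge 0$ to get $[\mathbf M]_{k,j}\ge-\hat{a}^{(k)}_j$, then bound $-\hat{a}^{(k)}_j$ below by replacing the weight $(t_j+t_{j+1}-2s)/(\tau_j(\tau_j+\tau_{j+1}))$ with its minimum over $s\in[t_{j-1},t_j]$, namely $\tau_{j+1}/(\tau_j(\tau_j+\tau_{j+1}))=\tfrac1{\tau_j}\cdot\tfrac{\rho_{j+1}}{1+\rho_{j+1}}\ge\tfrac{\eta}{(1+\eta)\tau_j}$, using monotonicity of $x\mapsto x/(1+x)$ and $\rho_{j+1}\ge\eta$. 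The diagonal bound becomes an equality after dropping $\hat{c}^{(k)}_{k-1}\ge0$: the substitution $u=t_k^*-s$ gives $\tfrac1{\tau_k}\int_{t_{k-1}}^{t_k^*}(t_k^*-s)^{-\alpha}\,\ds=\sigma^{1-\alpha}/((1-\alpha)\tau_k^\alpha)$.

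For (Q2), the off-diagonal differences $[\mathbf M]_{k,j}-[\mathbf M]_{k,j-1}=\hat{d}^{(k)}_j-\hat{d}^{(k)}_{j-1}$ for $2\le j\le k-1$ are exactly the quantity already estimated in \eqref{eq:P9}, so that bound (with $j$ shifted to $j-1$) delivers the claim verbatim. For the diagonal difference I would use the identity, obtained by expanding the reformulations of $\hat{a}^{(k)}_{k-1}$ and $\hat{c}^{(k)}_{k-1},\hat{c}^{(k)}_{k-2}$, that $\hat{c}^{(k)}_{k-1}-\hat{c}^{(k)}_{k-2}+\hat{a}^{(k)}_{k-1}=\sum_\ell\varpi^\ell\theta^\ell G^{\ell}_{k-1,k}-\sum_\ell\varpi^\ell e^{-\theta^\ell\sigma\tau_k}$. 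Combined with $G^{\ell}_{k-1,k}>0$ from \eqref{eq:gjkineq}, the SOE estimate $\sum_\ell\varpi^\ell e^{-\theta^\ell\sigma\tau_k}\le(\sigma\tau_k)^{-\alpha}+\varepsilon$ (valid since $\Delta t\le\sigma\tau_k\le t_k\le T_{\rm soe}$), and the algebraic identity $\sigma^{1-\alpha}/((1-\alpha)\tau_k^\alpha)-(\sigma\tau_k)^{-\alpha}=\alpha/(2(1-\alpha)(\sigma\tau_k)^\alpha)$, this yields the stated lower bound.

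The crux is (Q3), which needs the stronger comparison $\tfrac{1-\alpha}{\sigma}[\mathbf M]_{k,k}\ge[\mathbf M]_{k,k-1}$. Rewriting the left side using the diagonal formulas and the identity above, the target equals $\tfrac1{(\sigma\tau_k)^\alpha}-\sum_\ell\varpi^\ell e^{-\theta^\ell\sigma\tau_k}-\tfrac{\alpha}{2\sigma}\hat{c}^{(k)}_{k-1}+\sum_\ell\varpi^\ell\theta^\ell G^{\ell}_{k-1,k}$. Bounding the first difference below by $-\varepsilon$ and invoking the mesh condition $\varepsilon\le\tfrac{1-\alpha}{\sigma}\hat{c}^{(k)}_{k-1}$ from \eqref{thm2cond} collapses the three scalar terms to exactly $-\hat{c}^{(k)}_{k-1}$, so everything reduces to proving $\sum_\ell\varpi^\ell\theta^\ell G^{\ell}_{k-1,k}\ge\hat{c}^{(k)}_{k-1}$. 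Since the first term of $G^{\ell}_{k-1,k}$ reproduces $\hat{c}^{(k)}_{k-1}$ after multiplication by $\varpi^\ell\theta^\ell$ and summation, this amounts to showing that the third term of $G^{\ell}_{k-1,k}$ dominates the (subtracted) second term for each $\ell$.

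This last comparison is where I expect the main obstacle: a naive pointwise comparison of the polynomial weights fails, because the mesh condition only delivers $\rho_{k-1}^2(1+\rho_{k-1})\ge1/3$ rather than $\ge1$. The resolution I would pursue is, first, to bound the second term's exponential $e^{-\theta^\ell(t_k^*-t_{k-2}+s\tau_{k-2})}$ by the third's $e^{-\theta^\ell(t_k^*-t_{k-1}+s\tau_{k-1})}$ via $\tau_{k-1}(1-s)+s\tau_{k-2}\ge0$, reducing both to integrals $I_1=\int_0^1(1-s)g\,\ds$ and $I_2=\int_0^1 s(1-s)g\,\ds$ against the common kernel $g(s)=e^{-\theta^\ell(t_k^*-t_{k-1}+s\tau_{k-1})}$; and second, to exploit the sharper weighted inequality $I_2\le\tfrac13 I_1$, valid because $g$ is nonincreasing and $\int_0^1(s-\tfrac13)(1-s)\,\ds=0$, so that $I_2-\tfrac13 I_1=\int_0^1(s-\tfrac13)(1-s)(g-g(\tfrac13))\,\ds\le0$. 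Writing the third term as $\tau_{k-1}\big(I_1+\tfrac1{1+\rho_k}I_2\big)$ and the (bounded) second term as $\tfrac{\tau_{k-1}}{\rho_{k-1}^2(1+\rho_{k-1})}I_2$, their difference is $\ge\tau_{k-1}\big(I_1+\text{coeff}\cdot I_2\big)$; when $\text{coeff}\ge0$ this is clearly positive, and when $\text{coeff}<0$ the estimate $I_2\le\tfrac13 I_1$ together with $1/(\rho_{k-1}^2(1+\rho_{k-1}))\le3$ gives $I_1+\text{coeff}\cdot I_2\ge I_1/(3(1+\rho_k))>0$. This closes $\sum_\ell\varpi^\ell\theta^\ell G^{\ell}_{k-1,k}\ge\hat{c}^{(k)}_{k-1}$ and hence (Q3); the boundary case $k=2$ is easier since $\hat{c}^{(2)}_0=0$ makes the second term vanish.
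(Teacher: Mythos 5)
Your proposal is correct and follows essentially the same route as the paper's proof: the same reduction of (Q1) via $[\mathbf M]_{k,j}=\hat d^{(k)}_j\ge -\hat a^{(k)}_j$ with the weight minimized at $s=t_j$, the same invocation of \eqref{eq:P9} for the off-diagonal part of (Q2) together with the identity $\hat c^{(k)}_{k-1}-\hat c^{(k)}_{k-2}+\hat a^{(k)}_{k-1}=\sum_{\ell}\varpi^{\ell}\theta^{\ell}G^{\ell}_{k-1,k}-\sum_{\ell}\varpi^{\ell}e^{-\theta^{\ell}\sigma\tau_k}$ and the SOE bound for the diagonal part, and the same reduction of (Q3) to showing that the third term of $G^{\ell}_{k-1,k}$ dominates the subtracted second term. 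The only difference is cosmetic: your mean-point proof of $I_2\le\tfrac13 I_1$ is an equivalent restatement of the paper's integration-by-parts argument that $\int_0^1(1-3s)(1-s)e^{-\theta^{\ell}s\tau}\,\ds\ge 0$ (since $\int_0^1(1-3s)(1-s)g\,\ds=I_1-3I_2$), combined with the same coefficient bound $\rho_{k-1}^2(1+\rho_{k-1})\ge 1/3$ coming from the mesh condition.
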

\begin{proof}  
From \eqref{matM} and the first equation in \eqref{eq:akj_rem}, for $1\le j\le k-1$, 
\begin{equation*}\label{ineq:mrho2}
     \begin{aligned}
      [\mathbf M]_{k,j}&=\hat{d}^{(k)}_{j}\ge- \hat{a}^{(k)}_{j}
      \ge\sum_{\ell=1}^{N_{q}} \int_0^1 \frac{\tau_{j+1}}{\tau_{j}+\tau_{j+1}}\varpi^{\ell}e^{-\theta^{\ell}(t_k^*-(t_{j-1}+s \tau_j))}\,\ds\\& \ge \frac{\eta}{(1+\eta)\tau_j}\sum_{\ell=1}^{N_{q}} \int_{t_{j-1}}^{t_j} \varpi^{\ell}e^{-\theta^{\ell}(t_k^*-s)}\,\ds,
     \end{aligned}
\end{equation*}
     and  
     \begin{equation*}
          [\mathbf M]_{k,k}=\hat{c}^{(k)}_{k-1}+\frac{\sigma^{1-\alpha}}{(1-\alpha)\tau_k^\alpha}\ge\frac{\sigma^{1-\alpha}}{(1-\alpha)\tau_k^\alpha}=\frac{1}{\tau_k}\int_{t_{k-1}}^{t_k^*}(t_k^*-s)^{-\alpha}\ds.
     \end{equation*}

For $ 2\le j\le k-1$,
$[\mathbf M]_{k,j}-[\mathbf M]_{k,j-1}=\hat{d}^{(k)}_{j}-\hat{d}_{j-1}^{(k)}$. Then the first the result of (Q2) is from  \eqref{eq:P9}.
 Moreover from Lemma~\ref{lem:SOE_approx}, if \eqref{condition:rho} holds for $j=k-1$, we have
    \begin{align*}
  & [\mathbf M]_{k,k}-[\mathbf M]_{k,k-1}
    =\frac{\sigma^{1-\alpha}}{(1-\alpha)\tau_k^{\alpha}}+(\hat{c}^{(k)}_{k-1}-\hat{c}^{(k)}_{k-2}+\hat{a}^{(k)}_{k-1})\\
    =&\frac{\sigma^{1-\alpha}}{(1-\alpha)\tau_k^{\alpha}}+\sum_{\ell=1}^{N_{q}} \varpi^{\ell}\theta^{\ell}G_{k-1,k}^{\ell}-\sum_{\ell=1}^{N_{q}} \varpi^{\ell}e^{-\theta^{\ell}\sigma\tau_k}\\
     >&\frac{\sigma^{1-\alpha}}{(1-\alpha)\tau_k^{\alpha}}-\sum_{\ell=1}^{N_{q}} \varpi^{\ell}e^{-\theta^{\ell}\sigma\tau_k}\ge \frac{\sigma^{1-\alpha}}{(1-\alpha)\tau_k^{\alpha}}-(\sigma\tau_k)^{-\alpha}-\varepsilon=\frac{\alpha}{2(1-\alpha)(\sigma\tau_{k})^{\alpha}}-\varepsilon.
    \end{align*}
  where $G_{k-1,k}^{\ell}>0$ is defined in \eqref{eq:gjk}.
For the property (Q3),  from \eqref{eq:akj_rem} and \eqref{eq:ckj_rem} and Lemma~\ref{lem:SOE_approx}  we have {\small
 \begin{align*}
  &  \frac{1-\alpha}{\sigma}[\mathbf M]_{k,k}-[\mathbf M]_{k,k-1}
   \\
   =&(\sigma\tau_k)^{-\alpha}+\frac{1-\alpha}{\sigma}\hat{c}^{(k)}_{k-1}-
    \sum_{\ell=1}^{N_{q}} \varpi^{\ell}\theta^{\ell}\bigg(  \frac{\tau_{k-2}^3}{\tau_{k-1}(\tau_{k-2}+\tau_{k-1})}\int_0^{1} s(1-s)e^{-\theta^{\ell}(t_k^*-t_{k-2}+s \tau_{k-2})}\  \ds\\
    &- \frac{\tau_{k-1}}{\tau_{k-1}+\tau_{k}}
    \int_0^{1} (\tau_{k-1}+\tau_{k}+s\tau_{k-1})
    (1-s)e^{-\theta^{\ell}(t_k^*-t_{k-1}+s \tau_{k-1})}\,\ds\bigg)-\sum_{\ell=1}^{N_{q}} \varpi^{\ell}e^{-\theta^{\ell}\sigma\tau_k}\\
     \ge& (\sigma\tau_k)^{-\alpha}+\frac{1-\alpha}{\sigma}\hat{c}^{(k)}_{k-1}-\sum_{\ell=1}^{N_{q}} \varpi^{\ell}e^{-\theta^{\ell}\sigma\tau_k}
     \ge \frac{1-\alpha}{\sigma}\hat{c}^{(k)}_{k-1}-\varepsilon\ge 0,
    \end{align*}}
    where we use  the same technique as the proof of \eqref{eq:gjkineq} to obtain the  first inequality.
\end{proof}
We now analyze the approximation error of the  fast L2-1$_\sigma$ formula in the following lemma.
\begin{lemma}\label{lem4.3}
Given a function $u$ satisfying $|\partial_t^m u(t)|\le C_m(1+t^{\alpha-m})$ for $m=1,3$ and the nonuniform mesh $\{\tau_k\}_{k=1}^N$ in \eqref{mesh} satisfying \eqref{thm2cond}. The approximation error is defined by 
\begin{equation}\label{eq:localrk}
\begin{aligned}
  r_k \coloneqq& \frac{1}{\Gamma(1-\alpha)}\bigg(\int _0^{t_{k-1}}(t_k^*-s)^{-\alpha}\partial_s u(s)\ \ds- \int _0^{t_{k-1}}\sum_{\ell=1}^{N_{q}} \varpi^{\ell}e^{-\theta^{\ell}(t_k^*-s )}\partial_s I_2u(s)\,\ds\\
  &+ \int ^{t_k^*}_{t_{k-1}}(t_k^*-s)^{-\alpha}\partial_s [u(s)-\Pi_{1,k}u(s)]\ \ds \bigg)\qquad k\ge 1,  
\end{aligned}
\end{equation} 
where  $I_2 u=\Pi_{2,j}u$ on $(t_{j-1},t_j)$ for $j<k$, and $ \Pi_{1,j}$ and $\Pi_{2,j}$ are two standard Lagrange interpolation operators with the interpolation points:
 $  \{t_{j-1},t_j\},\  \{t_{j-1},t_j,t_{j+1}\}
$, respectively. 
Then for $k\geq 1$,
\begin{equation*}\label{eq:approxerror}
\begin{aligned}
   | r_k|\le &\frac{C}{\Gamma(1-\alpha)}\bigg\{  [\mathbf M]_{k,1}\big(t_2^\alpha/\alpha+t_2\big)+\sum_{j=2}^k([\mathbf M]_{k,j}-[\mathbf M]_{k,j-1})(1+\rho_{j+1})(1+t_{j-1}^{\alpha-3})\tau_j^3\\
  & \qquad+\varepsilon(t_{k-1}+t_{k-1}^\alpha/\alpha+2\sigma (1+t_{k-1}^{\alpha-3})\tau^3_k)\mathbf{1}_{k\ge2}\bigg\},
\end{aligned}
\end{equation*}
where $C>0$ is a constant depending only on $C_m$ for $m=1,3$ and $\rho_{k+1}=1$.
\end{lemma}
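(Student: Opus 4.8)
The plan is to split $r_k$ according to its definition \eqref{eq:localrk} into three pieces and match each against the weights in the claimed bound. Writing $K(s)=(t_k^*-s)^{-\alpha}$, I first rewrite the two history integrals as
\begin{align*}
&\int_0^{t_{k-1}}K\,\partial_s u\,\ds-\int_0^{t_{k-1}}\sum_{\ell=1}^{N_q}\varpi^\ell e^{-\theta^\ell(t_k^*-s)}\partial_s I_2 u\,\ds\\
&\quad=\underbrace{\int_0^{t_{k-1}}K\,\partial_s(u-I_2u)\,\ds}_{A}+\underbrace{\int_0^{t_{k-1}}\Big(K-\sum_{\ell=1}^{N_q}\varpi^\ell e^{-\theta^\ell(t_k^*-s)}\Big)\partial_s I_2u\,\ds}_{B},
\end{align*}
so that $\Gamma(1-\alpha)\,r_k=A+B+R_3$ with $R_3=\int_{t_{k-1}}^{t_k^*}K\,\partial_s(u-\Pi_{1,k}u)\,\ds$. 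Here $A$ is the history interpolation error carrying the \emph{exact} kernel, $B$ is the sum-of-exponentials error, and $R_3$ is the local truncation error; these produce respectively the interpolation sum, the $\varepsilon$-terms, and the $j=k$ term of the asserted estimate.

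For $A$ I treat the first subinterval separately. On $(0,t_1)$ the bound $|\partial_s^3u|\le C_3(1+s^{\alpha-3})$ is not integrable, so I estimate crudely by $|\partial_s(u-\Pi_{2,1}u)|\le|\partial_s u|+|\partial_s\Pi_{2,1}u|$, use $|\partial_su|\le C_1(1+s^{\alpha-1})$ and the divided-difference bound on $\partial_s\Pi_{2,1}u$ over $(0,t_2)$, and absorb $\int_0^{t_1}K\,\ds$ into $[\mathbf M]_{k,1}$ via (Q1); this yields $[\mathbf M]_{k,1}(t_2^\alpha/\alpha+t_2)$. On each $(t_{j-1},t_j)$ with $j\ge2$ the quadratic interpolation error vanishes at both nodes $t_{j-1},t_j$, so integration by parts gives $\int_{t_{j-1}}^{t_j}K\,\partial_s(u-\Pi_{2,j}u)\,\ds=-\int_{t_{j-1}}^{t_j}K'(u-\Pi_{2,j}u)\,\ds$ with no boundary term. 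Using $|u-\Pi_{2,j}u|\le C\tau_j^2(\tau_j+\tau_{j+1})\sup|\partial_s^3u|\le C(1+\rho_{j+1})(1+t_{j-1}^{\alpha-3})\tau_j^3$ and $\int_{t_{j-1}}^{t_j}K'\,\ds=(t_k^*-t_j)^{-\alpha}-(t_k^*-t_{j-1})^{-\alpha}$, it remains to dominate this exact-kernel variation by $[\mathbf M]_{k,j}-[\mathbf M]_{k,j-1}$, which is precisely the content of the lower bound (Q2) once the comparison constant is controlled through $\rho_{j+1}\ge\eta$. Summing over $2\le j\le k-1$ produces the corresponding part of the interpolation sum.

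For $R_3$ I exploit the cancellation built into $\sigma=1-\alpha/2$. Writing $\partial_s\Pi_{1,k}u=\tau_k^{-1}\int_{t_{k-1}}^{t_k}\partial_su\,\ds$ and expanding $\partial_su(s)$ to second order about $t_k^*$, the value $\partial_su(t_k^*)$ cancels against its average and the first-order part reduces to $\partial_s^2u(t_k^*)\,(s-\tfrac{t_{k-1}+t_k}{2})$; integrated against $K$ this contributes $\partial_s^2u(t_k^*)\int_{t_{k-1}}^{t_k^*}K(s)(s-\tfrac{t_{k-1}+t_k}{2})\,\ds$, which vanishes identically because $\tfrac12-\tfrac{\sigma}{2-\alpha}=0$ for $\sigma=1-\alpha/2$. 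The third-order remainder is then bounded by $C\tau_k^{3-\alpha}(1+t_{k-1}^{\alpha-3})$, and rewriting $\tau_k^{-\alpha}$ through the lower bound $[\mathbf M]_{k,k}-[\mathbf M]_{k,k-1}\ge\frac{\alpha}{2(1-\alpha)(\sigma\tau_k)^\alpha}-\varepsilon$ from (Q2) yields the $j=k$ term (with the convention $\rho_{k+1}=1$). Finally, for $B$ I note that $s\in[0,t_{k-1}]$ forces $t_k^*-s\in[\sigma\tau_k,t_k^*]\subseteq[\Delta t,T_{\mathrm{soe}}]$ under \eqref{thm2cond} and \eqref{mesh}, so Lemma~\ref{lem:SOE_approx} gives $|B|\le\varepsilon\int_0^{t_{k-1}}|\partial_sI_2u|\,\ds$; splitting $\partial_sI_2u=\partial_su+\partial_s(I_2u-u)$, the first piece gives $\varepsilon\,C_1(t_{k-1}+t_{k-1}^\alpha/\alpha)$ while the second, governed by the quadratic interpolation-error derivative (which on the last history subinterval involves the node $t_k$ and hence $\tau_k$), supplies the remaining $\varepsilon$-contribution. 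I expect the main obstacle to be the passage, for $j\ge2$ and for $j=k$, from the exact-kernel weights produced by the interpolation and truncation errors to the sum-of-exponentials matrix differences $[\mathbf M]_{k,j}-[\mathbf M]_{k,j-1}$: this is exactly where (Q1)--(Q2) and the ratio condition $\rho_{j+1}\ge\eta$ are indispensable, and where the $\sigma=1-\alpha/2$ cancellation must be executed carefully so that only $\partial_s^3u$, and never the uncontrolled $\partial_s^2u$, survives in the local term.
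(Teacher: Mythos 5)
Your three-way split is the \emph{reverse} of the paper's pairing, and this reversal is where the argument breaks. Writing $\phi(s)=\sum_{\ell=1}^{N_q}\varpi^\ell e^{-\theta^\ell(t_k^*-s)}$, the paper decomposes the history error as
\begin{equation*}
\int_0^{t_{k-1}}\bigl(K-\phi\bigr)\,\partial_s u\,\ds\;+\;\int_0^{t_{k-1}}\phi\,\partial_s\bigl(u-I_2u\bigr)\,\ds,
\end{equation*}
i.e.\ it pairs the SOE error with $\partial_s u$ (giving the single term $\varepsilon\int_0^{t_{k-1}}|\partial_s u|\le C_1\varepsilon(t_{k-1}+t_{k-1}^{\alpha}/\alpha)$) and pairs the \emph{SOE kernel} with the interpolation error. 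You instead pair the \emph{exact} kernel $K$ with the interpolation error (your $A$) and the SOE error with $\partial_s I_2u$ (your $B$). The fatal step is in $A$: after your integration by parts the weights are exact-kernel quantities built from $K'(s)=\alpha(t_k^*-s)^{-\alpha-1}$, whereas $[\mathbf M]_{k,j}-[\mathbf M]_{k,j-1}$ and its lower bound (Q2) in Lemma~\ref{proMkj} are sum-of-exponentials quantities of the form $\sum_\ell\varpi^\ell\theta^\ell\int(\cdot)\,e^{-\theta^\ell(t_k^*-\cdot)}$. Dominating the exact-kernel variation by the matrix difference is \emph{not} ``precisely the content of (Q2)'': (Q2) compares the matrix difference with an SOE expression, and no hypothesis compares exact-kernel \emph{derivatives} with SOE derivatives. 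Lemma~\ref{lem:SOE_approx} controls only kernel values, $|t^{-\alpha}-\sum_\ell\varpi^\ell e^{-\theta^\ell t}|\le\varepsilon$; it says nothing pointwise about $\alpha t^{-\alpha-1}$ versus $\sum_\ell\varpi^\ell\theta^\ell e^{-\theta^\ell t}$, and a per-$\ell$ comparison fails outright because the (Q2) weight $(t_j-s)(t_{j+1}-s)$ vanishes at $s=t_j$, exactly where $e^{-\theta^\ell(t_k^*-s)}$ concentrates: the ratio $\int_0^1(1-s)^2e^{\lambda s}\,\ds\,/\int_0^1e^{\lambda s}\,\ds$ tends to $0$ as $\lambda=\theta^\ell\tau_j\to\infty$.

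You can patch $A$ by converting $K$ into $\phi$ \emph{before} integrating by parts, at cost $\varepsilon$ per unit length, but this reinstates an error of size $O(\varepsilon\,\tau_j(\tau_j+\tau_{j+1}))$ on every history interval; multiplied by the factor $C_3(1+t_{j-1}^{\alpha-3})\tau_j$ and summed, it produces $C\varepsilon\sum_{j=2}^{k-1}(1+\rho_{j+1})(1+t_{j-1}^{\alpha-3})\tau_j^3$, a term absent from the claimed estimate, whose $\varepsilon$-part is only $\varepsilon(t_{k-1}+t_{k-1}^{\alpha}/\alpha+2\sigma(1+t_{k-1}^{\alpha-3})\tau_k^3)$. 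Since the mesh ratios are bounded only from below, $\tau_j^3t_{j-1}^{\alpha-3}$ can dwarf $t_{k-1}+t_{k-1}^{\alpha}/\alpha$ (take $\tau_1$ tiny and $\tau_2=O(1)$), and the surplus cannot be absorbed into the main sum either, because nothing in \eqref{thm2cond} guarantees $[\mathbf M]_{k,j}-[\mathbf M]_{k,j-1}\gtrsim\varepsilon$ for $j$ far from $k$: the (Q2) lower bound decays through the factors $e^{-\theta^\ell(t_k^*-t_j)}$ while $\varepsilon$ is fixed. The paper's pairing avoids all of this: with $\phi$ attached to $u-I_2u$, the integrated-by-parts weight $\sum_\ell\varpi^\ell\theta^\ell(t_j-s)(t_{j+1}-s)e^{-\theta^\ell(t_k^*-s)}$ matches the (Q2) lower bound term by term in $\ell$, with no $\varepsilon$ loss at all. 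Your treatment of the local term $R_3$ (the $\sigma=1-\alpha/2$ cancellation plus the $j=k$ bound of (Q2)) coincides with the paper's, and your $B$ is salvageable modulo constants; the history pairing is the genuine gap.
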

 \begin{proof}
The case of $k=1$ can be verified easily. We now consider the case of $k\geq 2$.
From \eqref{eq:localrk} and  Lemma~\ref{lem:SOE_approx},  it is not difficult to derive 
\begin{align}\label{eq:localrk1}
| r_k| 
  \le& \frac{\varepsilon}{\Gamma(1-\alpha)}\int _0^{t_{k-1}}|\partial_s u(s)|\ \ds+\frac{1}{\Gamma(1-\alpha)}\bigg|\int ^{t_k^*}_{t_{k-1}}(t_k^*-s)^{-\alpha}\partial_s [u(s)-\Pi_{1,k}u(s)]\ \ds\bigg|\\&+\frac{1}{\Gamma(1-\alpha)}\bigg|\int _0^{t_{k-1}}\sum_{\ell=1}^{N_{q}} \varpi^{\ell}e^{-\theta^{\ell}(t_k^*-s )}[\partial_s u(s)-\partial_s I_2u(s)]\,\ds\bigg|.\nonumber
\end{align}
Using $|\partial_s u(s)|\le C_1(1+s^{\alpha-1})$ gives 
$$
  \int _0^{t_{k-1}}|\partial_s u(s)|\ \ds\le    C_1(t_{k-1}+t_{k-1}^\alpha/\alpha).
$$
Similar to \cite[Lemma 4.5]{quan2022stability}, we can derive the following inequalities based on Lemma~\ref{proMkj}:
\begin{align*}
     &\bigg|\int _0^{t_{1}}\sum_{\ell=1}^{N_{q}} \varpi^{\ell}e^{-\theta^{\ell}(t_k^*-s )}[\partial_s u(s)-\partial_s I_2u(s)]\,\ds\bigg|\le C [\mathbf M]_{k,1}(t_2^\alpha/\alpha+t_2),\\
     &\bigg|\int _{t_j-1}^{t_{j}}\sum_{\ell=1}^{N_{q}} \varpi^{\ell}e^{-\theta^{\ell}(t_k^*-s )}[\partial_s u(s)-\partial_s I_2u(s)]\,\ds\bigg|\\ \le& C_3 ([\mathbf M]_{k,j}-[\mathbf M]_{k,j-1)}(1+\rho_{j+1})(1+t_{j-1}^{\alpha-3})\tau_j^3 ,\quad 2\le j\le k-1,
 \end{align*}
 and 
 \begin{align*}
      &\bigg|\int ^{t_k^*}_{t_{k-1}}(t_k^*-s)^{-\alpha}\partial_s [u(s)-\Pi_{1,k}u(s)]\ \ds\bigg|\\\le& 2 C_3([\mathbf M]_{k,k}-[\mathbf M]_{k,k-1})\sigma (1+t_{k-1}^{\alpha-3})\tau^3_k +2\varepsilon C_3\sigma (1+t_{k-1}^{\alpha-3})\tau^3_k. 
 \end{align*}
Combining the above inequalities, we can obtain the desired result.
\end{proof}
\begin{theorem}[Sharp finite time $H^1$-error estimate]\label{thm:err}
Assume that $u\in C^3((0,T_{\rm{soe}}],H^1_0(\Omega)\cap H^2(\Omega))$ is a solution to \eqref{eq:subdiffusion} in finite time $(0,T_{\rm soe}]$ and $|\partial_t^m u(t)|\le C_m(1+t^{\alpha-m})$, for $m=1,2,3$,  $0< t\le T_{\rm{soe}}$. If the nonuniform mesh $\{\tau_k\}_{k=1}^N$ in \eqref{mesh} satisfies \eqref{thm2cond} and  $\varepsilon\le \frac12 T_{\rm soe}^{-\alpha}$,  then the numerical solutions of fast L2-1$_\sigma$ scheme~\eqref{eq:sch_sub} satisfies the following  error estimate
\begin{equation*}\label{eq:globalerr}
    \begin{aligned}
  \max_{1\le k\le N}  \|e^k\|_{H^1(\Omega)}\le  &C\bigg\{(t_2^\alpha/\alpha+t_2)+\frac{1}{1-\alpha}\max_{2\le k\le N}(1+\rho_{k+1}) (1+t_{k-1}^{\alpha-3})(t_{k-1}^*)^\alpha\tau_k^3\tau_{k-1}^{-\alpha} \\
  &+(\tau_1^\alpha/\alpha+\tau_1)\tau_1^{\alpha/2}+\sqrt{\Gamma(1-\alpha)}\max_{2\le k\le N}(t_k^*)^{\alpha/2}(1+ t_{k-1}^{\alpha-2})\tau_k^2\\
  &+\varepsilon\max_{2\le k\le N} (t_k^*)^\alpha(t_{k-1}+t_{k-1}^\alpha/\alpha+2\sigma (1+t_{k-1}^{\alpha-3})\tau^3_k)\bigg\},
\end{aligned}
\end{equation*}
where $e^k\coloneqq
u(t_k)-u^k$ and  $C$ is a constant depending only on $C_m$, $m=1,2,3$ and $\Omega$. Moreover, for the graded mesh with grading parameter $r\ge 1$, i.e.,
\begin{equation}\label{eq:tauj}
\begin{aligned}
    t_j  = \left(\frac{j}{N}\right)^r  T_{\rm{soe}}, \quad\tau_j  = t_j -t_{j-1} = \left[\left(\frac {j} { N}\right)^r-\left(\frac {j-1}{ N}\right)^r\right]  T_{\rm{soe}},
\end{aligned}
\end{equation}
we have 
\begin{equation}\label{eq:error_graded}
   \max_{1\leq k\leq N} \|e^k\|_{H^1(\Omega)}\le \tilde C \left(N^{-\min\{r\alpha,2\}}+\varepsilon  \right).
\end{equation}
where $\tilde C$ depends on  $\alpha, $ $\Omega$,  $T_{\rm{soe}}$, and $C_m$ with $m=1,2,3$.
\end{theorem}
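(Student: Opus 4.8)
The plan is to set up an error equation, turn it into an $H^1$-energy inequality by testing with $\delta_k e$ and invoking the positive semidefiniteness of $\mathcal B_n$ from Theorem~\ref{thm1}, and then estimate the two consistency contributions separately with Lemma~\ref{lem4.3} and the coefficient bounds of Lemma~\ref{proMkj}. First I would derive the error equation for $e^k\coloneqq u(t_k)-u^k$. Evaluating \eqref{eq:subdiffusion} at $t=t_k^*$ gives $\partial_t^\alpha u(t_k^*)=\Delta u(t_k^*)+f(t_k^*)$, and subtracting the scheme \eqref{eq:sch_sub} while recalling $t_k^*=(1-\sigma)t_{k-1}+\sigma t_k$ with $\sigma=1-\alpha/2$, I obtain, using $e^0=0$,
\[
F_k^{\alpha,*}e=\sigma\Delta e^k+(1-\sigma)\Delta e^{k-1}-\zeta_k-r_k,
\]
where $r_k$ is precisely the temporal truncation term estimated in Lemma~\ref{lem4.3} and $\zeta_k\coloneqq\Delta\big[\sigma u(t_k)+(1-\sigma)u(t_{k-1})-u(t_k^*)\big]$ is the spatial consistency error coming from the linear-in-time interpolation of $\Delta u$ at $t_k^*$.

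Next I would test this identity with $\delta_k e$ in $L^2(\Omega)$, integrate the Laplacian terms by parts (homogeneous Dirichlet boundary condition), and sum over $k=1,\dots,n$. By Theorem~\ref{thm1}, $\sum_k\langle F_k^{\alpha,*}e,\delta_k e\rangle=\mathcal B_n(e,e)\ge 0$, and the diffusion terms assemble, via $\sigma=1-\alpha/2>\tfrac12$, into
\[
\sum_{k=1}^n\langle\sigma\nabla e^k+(1-\sigma)\nabla e^{k-1},\nabla\delta_k e\rangle=\tfrac12\|\nabla e^n\|^2+\tfrac{1-\alpha}{2}\sum_{k=1}^n\|\nabla\delta_k e\|^2 .
\]
Combining these yields the master inequality $\tfrac12\|\nabla e^n\|^2+\tfrac{1-\alpha}{2}\sum_k\|\nabla\delta_k e\|^2\le\big|\sum_k\langle\zeta_k,\delta_k e\rangle\big|+\big|\sum_k\langle r_k,\delta_k e\rangle\big|$.

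For the spatial part, I would write $\langle\zeta_k,\delta_k e\rangle=-\langle\nabla w_k,\nabla\delta_k e\rangle$ with $w_k=\sigma u(t_k)+(1-\sigma)u(t_{k-1})-u(t_k^*)$, apply the linear-interpolation remainder $\|\nabla w_k\|\le C(1+t_{k-1}^{\alpha-2})\tau_k^2$ from the regularity hypothesis, and use Young's inequality to absorb $\tfrac{1-\alpha}{2}\|\nabla\delta_k e\|^2$ into the left-hand side; tracking the $1/\Gamma(1-\alpha)$ factor inherited from $F_k^{\alpha,*}$ and taking the square root accounts for the $\sqrt{\Gamma(1-\alpha)}\,(t_k^*)^{\alpha/2}(1+t_{k-1}^{\alpha-2})\tau_k^2$ term as well as the $k=1$ boundary term $(\tau_1^\alpha/\alpha+\tau_1)\tau_1^{\alpha/2}$. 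For the temporal part I would insert the bound on $|r_k|$ from Lemma~\ref{lem4.3}, which is already phrased in terms of the entries of $\mathbf M$, and estimate $\sum_k\langle r_k,\delta_k e\rangle$ by a weighted Cauchy--Schwarz together with the monotonicity and lower-bound properties (Q1)--(Q3) of Lemma~\ref{proMkj}. These properties convert the local, $\mathbf M$-weighted consistency bound into the global estimate carrying the factors $(t_k^*)^\alpha$ and $\tau_{k-1}^{-\alpha}$, producing the remaining terms $(t_2^\alpha/\alpha+t_2)$, $\tfrac{1}{1-\alpha}\max_k(1+\rho_{k+1})(1+t_{k-1}^{\alpha-3})(t_{k-1}^*)^\alpha\tau_k^3\tau_{k-1}^{-\alpha}$, and the $\varepsilon$-term. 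A final application of the Poincaré inequality $\|e^k\|\le C_\Omega\|\nabla e^k\|$ upgrades the gradient bound to the full $H^1$-norm. I expect the main obstacle to be exactly this temporal estimate: carrying out the summation sharply so that only the relaxed ratio constraint $\rho_k\ge\eta\approx0.475329$ is needed (rather than $\rho_k\ge 2/3$), which forces one to exploit the full strength of (Q1)--(Q3) and the sign of $G_{j,k}^\ell$ established in \eqref{eq:gjkineq}.

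Finally, for the graded mesh \eqref{eq:tauj} I would substitute $t_k\approx(k/N)^rT_{\rm{soe}}$ and $\tau_k\approx rN^{-r}k^{r-1}T_{\rm{soe}}$ into each term of the general estimate. The initial-layer contributions ($k=1,2$) scale like $N^{-r\alpha}$, while the interior contributions balance to $N^{-2}$, so their maximum is $N^{-\min\{r\alpha,2\}}$; together with the additive $\varepsilon$ from the SOE approximation this gives \eqref{eq:error_graded}. This last step is a routine, if lengthy, mesh computation once the general bound is in hand.
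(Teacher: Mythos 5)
Your scaffolding (error equation, splitting the consistency error into the temporal part $r_k$ of Lemma~\ref{lem4.3} and the interpolation part $\zeta_k$) matches the paper, but the central mechanism you propose --- testing with $\delta_k e$ and invoking the positive semidefiniteness of $\mathcal B_n$ from Theorem~\ref{thm1} --- is not what the paper does, and it cannot deliver the stated sharp estimate. The paper's proof (by reference to \cite[Theorem 4.6]{quan2022stability}) tests the error equation with $-\Delta e_k^*$, where $e_k^*=(1-\alpha/2)e^k+(\alpha/2)e^{k-1}$ is the weighted average matching the collocation point $t_k^*$, and then exploits the M-matrix-type structure of $\mathbf M$ --- exactly properties (Q1)--(Q3) of Lemma~\ref{proMkj} --- in an induction/maximum argument that effectively inverts the discrete convolution. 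The reason Lemma~\ref{lem4.3} states its bound in terms of $[\mathbf M]_{k,1}$ and the differences $[\mathbf M]_{k,j}-[\mathbf M]_{k,j-1}$ is precisely that these weights cancel against the kernel when the error equation is solved for $e^k$; this cancellation is what produces a final bound that is a \emph{maximum} of local quantities carrying the factors $(t_k^*)^\alpha$ and $\tau_{k-1}^{-\alpha}$, with no residual sum over time steps. Theorem~\ref{thm1} is the right tool for the stability theorems, but it is lossy here: once you keep only $\mathcal B_n(e,e)\ge 0$, all of that kernel structure is gone.

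Concretely, your route forces the temporal consistency term to be controlled through $\sum_k\langle r_k,\delta_k e\rangle$, and after Poincar\'e and Young (or Abel summation) this leads to a bound of the form $\|\nabla e^n\|^2\lesssim \sum_k\|r_k\|^2$ (or a BV-type bound on $(r_k)_k$). This fails because the $r_k$ are not pointwise small near $t=0$: already for the model solution $u(t)=t^\alpha$ one computes $r_1=\Gamma(1+\alpha)-\sigma^{1-\alpha}/\Gamma(2-\alpha)=O(1)$, \emph{independent of} $\tau_1$ (consistent with Lemma~\ref{lem4.3}, whose $k=1$ bound is $[\mathbf M]_{1,1}(t_2^\alpha/\alpha+t_2)\sim\tau_1^{-\alpha}t_2^\alpha=O(1)$ on a graded mesh). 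Hence $\sum_k\|r_k\|^2$ and the BV norm of $(r_k)_k$ are $O(1)$ uniformly in $N$, and your master inequality yields boundedness, not convergence; there is no mechanism in the $\delta_k e$/positive-semidefiniteness framework by which the $[\mathbf M]$-weights can cancel, so the factors $(t_k^*)^\alpha$, $\tau_{k-1}^{-\alpha}$ cannot ``emerge from weighted Cauchy--Schwarz'' as you assert. Even your treatment of the benign term $\zeta_k$ is lossy: absorbing into $\frac{1-\alpha}{2}\sum_k\|\nabla\delta_k e\|^2$ leaves $\sum_k\|\nabla w_k\|^2$, which on the graded mesh \eqref{eq:tauj} with $r\alpha\ge 2$ scales like $N^{-3}$, giving $N^{-3/2}$ after the square root instead of the claimed $N^{-2}$. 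To repair the argument you would need to abandon the $\delta_k e$ test function and redo the analysis with $-\Delta e_k^*$, following the kernel-preserving argument of \cite{quan2022stability}.
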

\begin{proof} Based on Lemma \ref{lem4.3}, the proof is similar to \cite[Theorem  4.6]{quan2022stability} except replacing the test function $e_k^{*}=(1-\alpha/2)e^k+\alpha/2 e^{k-1}$ with $-\Delta e_k^{*}$.  We omit it here and leave it for readers.
\end{proof}
\begin{remark}
The sharp finite time $H^1$-error estimate  of the fast L2-1$_\sigma$ scheme for subdiffusion equations has also been well studied in \cite{li2021second,liu2022unconditionally}.
In Theorem~\ref{thm:err}, we reduce the restriction $\rho_k\geq 2/3$ in \cite{li2021second,liu2022unconditionally} to $\rho_k\geq \eta \approx0.475329$. 
\end{remark}

\section{Numerical results}\label{sect6}
In this section, we provide some numerical results for  the fast L2-1$_\sigma$ schemes \eqref{eq:sch_sub} and \eqref{eq:sch_subnonlinear}. As in \cite{liao2018second,chen2019error}, all the discrete coefficients $a_j^{(k)}$, $c_j^{(k)}$,  $\hat{a}^{k,\ell}_{k-1}$ and $\hat{c}^{k,\ell}_{k-1}$ in \eqref{eq:akjl21sigma} and \eqref{eq:akjfastl21sigma} are computed by adaptive Gauss–Kronrod quadrature, to avoid roundoff error problems. 
\begin{exmp}\label{exm1}
  Consider the subdiffusion equation
  \eqref{eq:subdiffusion} with
 $f(t,x,y)=\Gamma(1+\alpha)(x^2-1)(y^2-1)-2t^\alpha(x^2+y^2-2)$, thus the exact solution $u(t,x,y)=t^\alpha(x^2-1)(y^2-1)$ in $\Omega=[-1,1]^2$.
\end{exmp}
 In this example, we use the graded mesh \eqref{eq:tauj}.
 We first compare the  computational cost of the fast L2-1$_\sigma$ scheme~\eqref{eq:sch_sub} with the standard L2-1$_\sigma$  scheme in  \cite{alikhanov2015new}. Here we use  spectral collocation method in space with $10^2$ Chebyshev--Gauss--Lobatto points.  We plot the CPU time w.r.t. the total number of time steps $N$ for both schemes with $\alpha=0.5$ on the left-hand side of Figure \ref{fig:cputime}.  Here we set  $T_{\rm{soe}}=1$, $\varepsilon=1e-12, \Delta t=\sigma \tau_2$ in Lemma~\ref{lem:SOE_approx} and the grading parameter $r=1.5$ in \eqref{eq:tauj}.  We observe that the CPU time of the fast L2-1$_\sigma$ scheme increases linearly w.r.t. $N$, while the cost of the original L2-1$_\sigma$  scheme increases quadratically.  
 
 To better understand the relationship between the number of quadrature nodes $N_q$ and the parameters $(\varepsilon, \Delta t)$ in Lemma~\ref{lem:SOE_approx},  we plot  $N_q$ w.r.t.  $\varepsilon$ for fixed $\Delta t=1e-5$  and $N_q$  w.r.t. $\Delta t$ for fixed $\varepsilon=1e-13$, respectively in the middle and right subfigures of Figure~\ref{fig:cputime} where $T_{\rm{soe}}=1$. 
 We can see that  $N_q$ increases almost linearly w.r.t.  both $\log(\varepsilon)$ and $\log(\Delta t)$.
 \begin{figure}[!ht]    
\centering
    \includegraphics[trim={2in 0in 1in 0in},clip,width=1\textwidth]{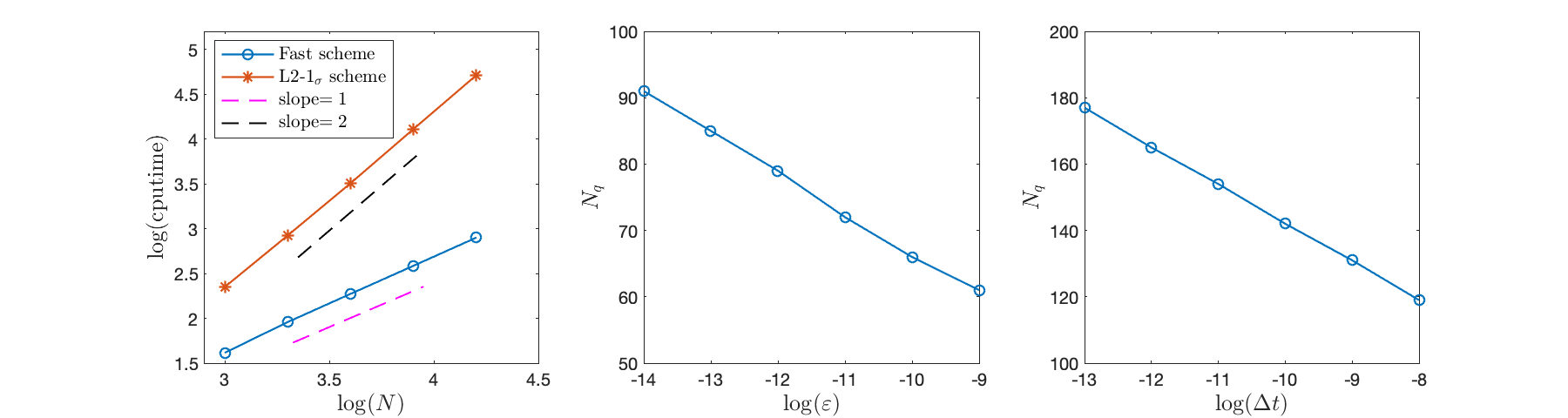}
    \vspace{-0.3in}
    \caption{(Example~\ref{exm1})  Left: CPU time in seconds w.r.t. number of time steps $N$  with  $T_{\rm{soe}}=1$, $ \varepsilon=1e-12, \Delta t=\sigma \tau_2$ for the SOE approximation  in Lemma~\ref{lem:SOE_approx} and $\alpha=0.5$, $r=1.5$ for the graded mesh \eqref{eq:tauj}. Middle:  $N_q$ w.r.t. $\varepsilon$ with $T_{\rm{soe}}=1,\ \Delta t= 1e-5$. Right:  $N_q$ w.r.t. $\Delta t$  with $T_{\rm{soe}}=1,\ \varepsilon = 1e-13$. }
    \label{fig:cputime}
\end{figure}
\begin{table}[!]
\renewcommand\arraystretch{1.1}
\begin{center}
\def\temptablewidth{1\textwidth}
\caption{(Example~\ref{exm1})  $\max_{1\leq k\leq N} \|u(t_k)-u^k\|$ for graded meshes with different grading parameters $r$ and time step numbers $N$ where $\alpha=0.3$.}\vspace{-0.2in}\label{tab1}
{\rule{\temptablewidth}{1pt}}
\begin{tabular*}{\temptablewidth}{@{\extracolsep{\fill}}cccccc}
   &$N=100$&$N=200 $ & $N=400 $&$N=800$&$N=1600$\\ \hline
$r=1/\alpha$  &1.8477e-3&	9.5337e-4&	4.8429e-4&	2.4407e-4&1.2252e-4\\
order& -- &0.9546   & 0.9772   & 0.9885   & 0.9943\\
\hline 
$r=2/\alpha$&5.2619e-5&	1.3355e-5&	3.3696e-6&	8.4709e-7&	2.1248e-7\\
order & -- &1.9782   & 1.9868 &   1.9920   & 1.9951
\\
\hline 
$r=3/\alpha$&1.1576e-4&	2.9563e-5&	7.4851e-6&	1.8855e-6&	4.7355e-7\\
order &-- &1.9693   & 1.9817  &  1.9890 &   1.9934
\\
\end{tabular*}
{\rule{\temptablewidth}{1pt}}
\end{center}
\end{table}
\begin{table}[!]
\renewcommand\arraystretch{1.1}
\begin{center}
\def\temptablewidth{1\textwidth}
\caption{(Example~\ref{exm1})  $\max_{1\leq k\leq N} \|u(t_k)-u^k\|$ for graded meshes with different grading parameters $r$ and time step numbers $N$ where $\alpha=0.5$.}\vspace{-0.2in}\label{tab2}
{\rule{\temptablewidth}{1pt}}
\begin{tabular*}{\temptablewidth}{@{\extracolsep{\fill}}cccccc}
   &$N=100$&$N=200 $ & $N=400 $&$N=800$&$N=1600$\\ \hline
$r=1/\alpha$  &2.1627e-3&	1.1124e-3&5.6416e-4	&2.8409e-4	&1.4255e-4 \\
order& -- &0.9592    &0.9795  &  0.9897&    0.9949\\
\hline 
$r=2/\alpha$&3.4309e-5&	8.6705e-6&	2.1836e-6&	5.4867e-7&	1.3764e-7\\
order & -- &1.9844   & 1.9894   & 1.9927   & 1.9950
\\
\hline 
$r=3/\alpha$&7.5018e-5&	1.9013e-5&	4.7967e-6&	1.2065e-6&	3.0290e-07\\
order &-- &1.9802  &  1.9869  &  1.9912  &  1.9940
\\
\end{tabular*}
{\rule{\temptablewidth}{1pt}}
\end{center}
\end{table}
\begin{table}[!]
\renewcommand\arraystretch{1.1}
\begin{center}
\def\temptablewidth{1\textwidth}
\caption{(Example~\ref{exm1}) $\max_{1\leq k\leq N} \|u(t_k)-u^k\|$ for graded meshes with different grading parameters $r$ and time step numbers $N$ where $\alpha=0.7$.}\vspace{-0.2in}\label{tab3}
{\rule{\temptablewidth}{1pt}}
\begin{tabular*}{\temptablewidth}{@{\extracolsep{\fill}}cccccc}
   &$N=100$&$N=200 $ & $N=400 $&$N=800$&$N=1600$\\ \hline
$r=1/\alpha$  &1.8710e-3&	9.5944e-4&	4.8584e-4&	2.4447e-4&	1.2262e-4 \\
order& -- & 0.9636   &0.9817  &  0.9908   & 0.9954\\
\hline 
$r=2/\alpha$&2.0342e-5&	5.1591e-6&	1.2961e-6&	3.2443e-7&	8.1133e-8\\
order & -- & 1.9793  &  1.9929 &   1.9982   & 1.9996
\\
\hline 
$r=3/\alpha$&3.9451e-5&	9.9647e-6&	2.5108e-6&	6.3158e-07&	1.5866e-7\\
order &-- &1.9852  &  1.9887   & 1.9911  &  1.9930
\\
\end{tabular*}
{\rule{\temptablewidth}{1pt}}
\end{center}
\end{table}
 \begin{figure}[!ht]    
\centering
    \includegraphics[trim={2in 0in 1.2in 0in},clip,width=1.\textwidth]{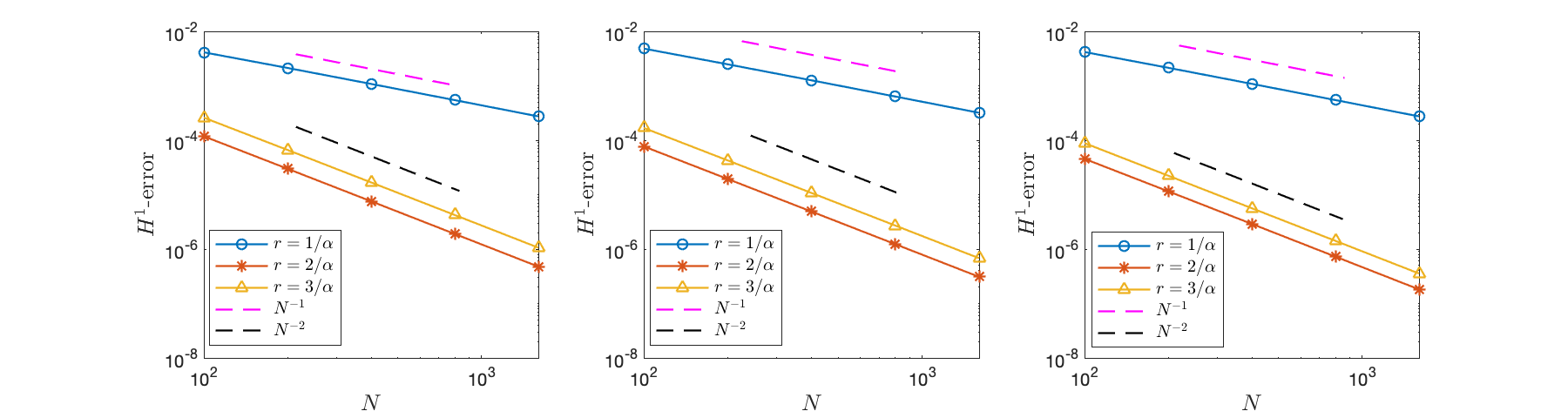}
   \vspace{-0.3in}
    \caption{(Example \ref{exm1}) Maximum $H^1$-error w.r.t. $N$ for $\alpha=0.3,\ 0.5,\ 0.7$ from left to right  with  different $r$ for graded meshes \eqref{eq:tauj}. }
    \label{fig:H1error}
\end{figure}

 We now test the convergence of the fast L2-1$_\sigma$ scheme  \eqref{eq:sch_sub}.  Let $T_{\rm{soe}}=1$, $\varepsilon=1e-12$, $\Delta t=\sigma\tau_2$ in the Lemma~\ref{lem:SOE_approx}, and apply the spectral collocation method in the space with $25^2$ Chebyshev--Gauss--Lobatto points.  Table~\ref{tab1}--\ref{tab3} present the  maximum $L^2$-errors with $\alpha=0.3,\ 0.5,\ 0.7$ and $r = 1/\alpha,\ 2/\alpha,\ 3/\alpha$ for graded meshes respectively. It can be observed that the convergence rates in $L^2$-norm are $N^{-\min\{r\alpha,2\}}$ for the fast L2-1$_\sigma$ scheme~\eqref{eq:sch_sub}  on graded meshes.
 The  maximum $L^2$-error is computed by $\max_{1\leq k\leq N} \|u(t_k)-u^k\|$.  We  show the  maximum $H^1$-error in Figure \ref{fig:H1error} with $\alpha=0.3,\ 0.5,\ 0.7$ and $r = 1/\alpha,\ 2/\alpha,\ 3/\alpha$ for graded meshes respectively. The  maximum  $H^1$-error is computed by $\max_{1\leq k\leq N} \|\nabla(u(t_k)-u^k)\|$.  We can also find that the  convergence rates in $H^1$-norm for the fast L2-1$_\sigma$ scheme~\eqref{eq:sch_sub} on graded meshes  are $N^{-\min\{r\alpha,2\}}$, which is consistent with Theorem~\ref{thm:err}.
\begin{exmp}[Long time simulation]\label{exm:global}
Consider the subdiffusion equation
  \eqref{eq:subdiffusion}, with two different sources terms
$  
      f_1(t,x,y)=t\sin(0.2t),~ f_2(t,x,y)=5\exp(-0.0005t)\sin(0.005t).
$
The initial condition is set to $u^0=\sin(\pi x)\sin(\pi y)$ in  $\Omega=[-1,1]^2$.
\end{exmp}

We test the global-in-time $H^1$-stability of the fast L2-1$_\sigma$ scheme \eqref{eq:sch_sub} with source term $f_1$ and $f_2$ respectively. The  spectral collocation method is used in space with $10^2$ Chebyshev--Gauss--Lobatto points. 
We set $T_{\rm{soe}}=1$, $\varepsilon=1e-12$, $\Delta t=\sigma\tau_2$ in  Lemma~\ref{lem:SOE_approx}.  
In addition, we use the following nonuniform mesh:
\begin{equation*}
\tau_j = \left\{
\begin{aligned}
& \left(\frac {j} {100}\right)^r-\left(\frac {j-1}{100}\right)^r && j \leq 100\\
    &1.005 \tau_{j-1} &&  j>100, ~1.005\tau_{j-1}<\tau_{\max} \coloneqq 0.2, \\
    &\tau_{\max} && \mbox{otherwise,}
\end{aligned}
\right.
\end{equation*}
where $r = 2/\alpha$.

We plot $\|\nabla u\|$ w.r.t. time $t$ in Figure~\ref{fig:H1nostability} and Figure~\ref{fig:H1stability} for $f_1$ and $f_2$ respectively, computed by the fast L2-1$_\sigma$ scheme~\eqref{eq:sch_sub} with $\alpha=0.5$.  
Note that the time $t_n$ can be much larger than $T_{\rm{soe}}$.
We can see that  $\|\nabla u\|$ is bounded in the case of $f_2$, but not in the case of $f_1$.  
One explanation is that  $f_1 \notin L^\infty([0,\infty);L^2(\Omega)) \cap BV([0,\infty); L^2(\Omega))$ so that the assumption on source term in Theorem~\ref{thm2} is not satisfied, while  $f_2\in L^\infty([0,\infty);L^2(\Omega)) \cap BV([0,\infty); L^2(\Omega))$ holds true.
   \begin{figure}
    \centering
    \includegraphics[trim={1.3in 0in 1in 0in},clip,width=1.\textwidth]{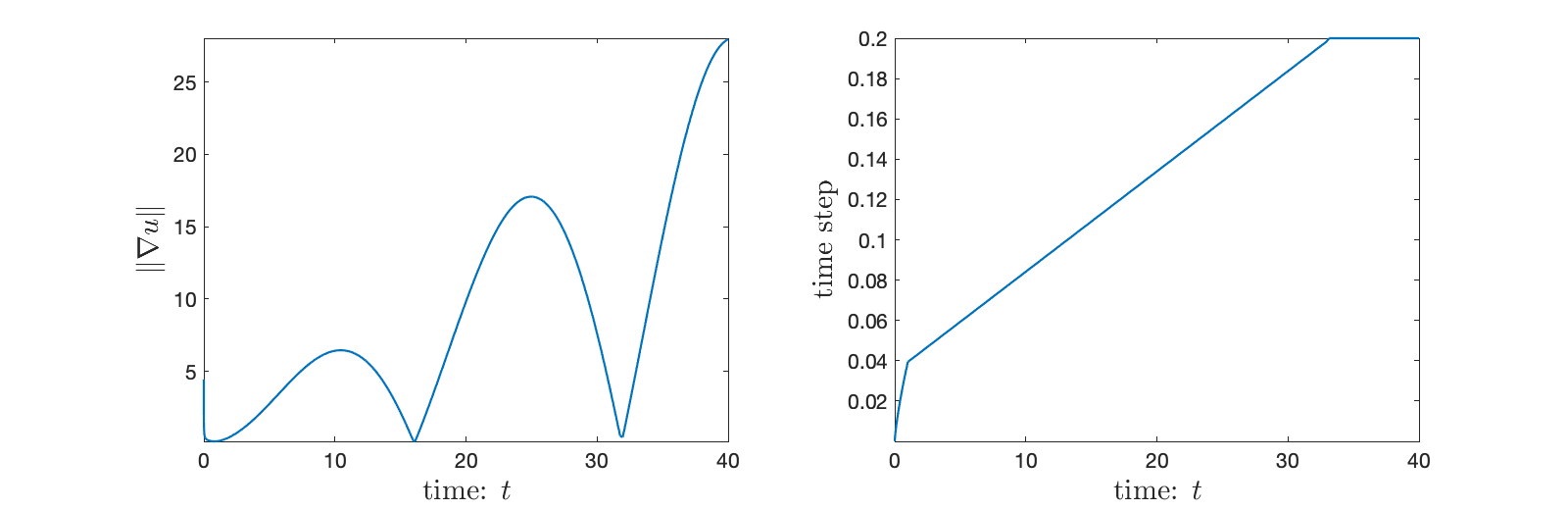}
    \vspace{-0.3in}
     \caption{(Example~\ref{exm:global}) Long time $H^1$-instability. Left: $\|\nabla u\|$ w.r.t. time $t$. Right:  Time step w.r.t. time $t$.}
    \label{fig:H1nostability}
\end{figure}
  \begin{figure}
    \centering
    \includegraphics[trim={1.3in 0in 1in 0in},clip,width=1.\textwidth]{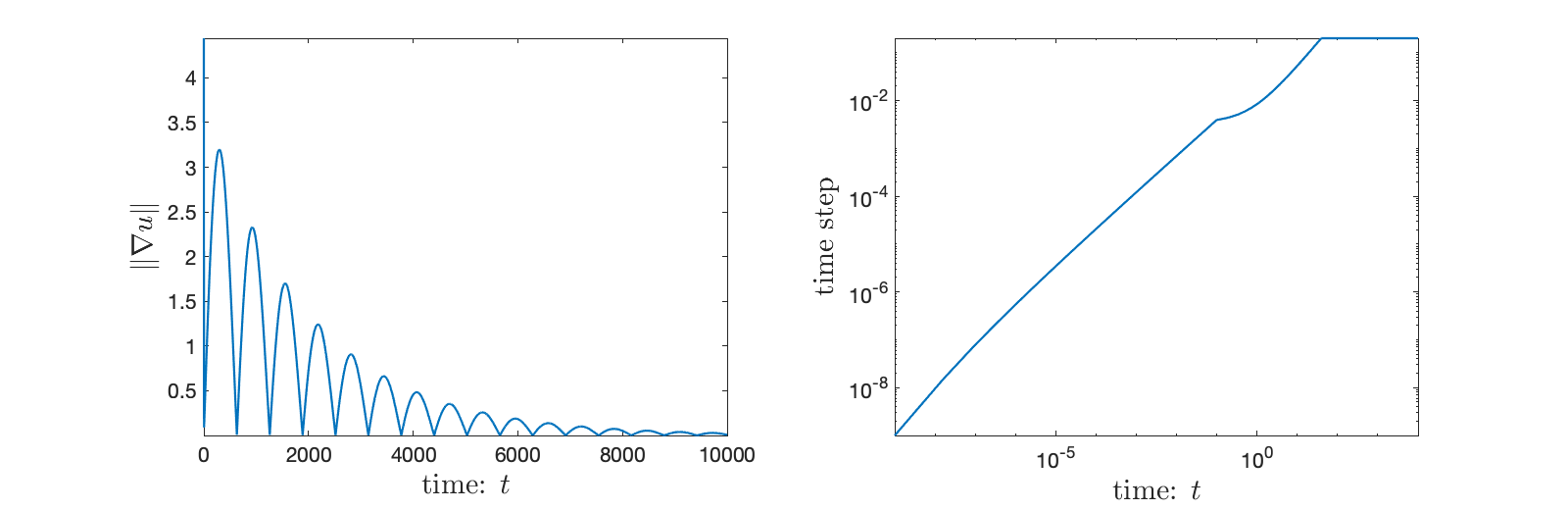}
    \vspace{-0.3in}
     \caption{(Example~\ref{exm:global}) Long time $H^1$-stability. Left: $\|\nabla u\|$ w.r.t. time: $t$.  Right:  Time step w.r.t. time: $t$.}
    \label{fig:H1stability}
\end{figure}
\begin{exmp}\label{eq:nonexpl}
\begin{enumerate}
    \item[(a)] Consider the semilinear subdiffusion equation
  \begin{equation*} 
    \begin{aligned}
        \partial_t^\alpha u = \Delta u +\sin(u)+ g(t,x,y),\quad (t,x,y)\in (0,1]\times(-1,1)^2
    \end{aligned}
\end{equation*}
with  homogeneous Dirichlet boundary condition.
To investigate the convergence orders in time, the initial condition $u^0$ and source term $g$ are chosen such that the exact solution is
 $u(t,x,y)=t^\alpha \sin(\pi x)\sin(\pi y)$.
  \item[(b)]  Consider the semilinear subdiffusion equation
  \begin{equation*} 
    \begin{aligned}
        \partial_t^\alpha u  =\nu^2 \Delta u +\sin(u ),\quad   (t,x,y)\in (0,\infty)\times(-1,1)^2
    \end{aligned}
\end{equation*}
 with homogeneous Dirichlet boundary condition, $\nu^2=0.01$, and  initial condition 
 $u^0=\sin(\pi x)\sin(\pi y)$.
 \end{enumerate}
\end{exmp}
For the problem (a), we use  the graded meshes \eqref{eq:tauj}.  
We set  $T_{\rm{soe}}=1$, $\varepsilon=1e-12$, $\Delta t=\sigma\tau_2$ in  Lemma~\ref{lem:SOE_approx}, and use spectral collocation method in space with $25^2$ Chebyshev--Gauss--Lobatto points. Figure \ref{fig:L2H1errornonlinear} shows the  maximum $L_2$-errors and  maximum $H^1$-error for $\alpha=0.3,\ 0.5,\ 0.7$ with $r = 1/\alpha,\ 2/\alpha,\ 3/\alpha$ for graded meshes~\eqref{eq:tauj}.
Again, it can be observed that  the convergence rates in both $L^2$-norm and $H^1$-norm are $N^{-\min\{r\alpha,2\}}$ for graded meshes, which agree with the results in \cite{liu2022unconditionally}.
\begin{figure}[!ht]    
\centering
    \includegraphics[trim={2in 0in 2in 0in},clip,width=1.\textwidth]{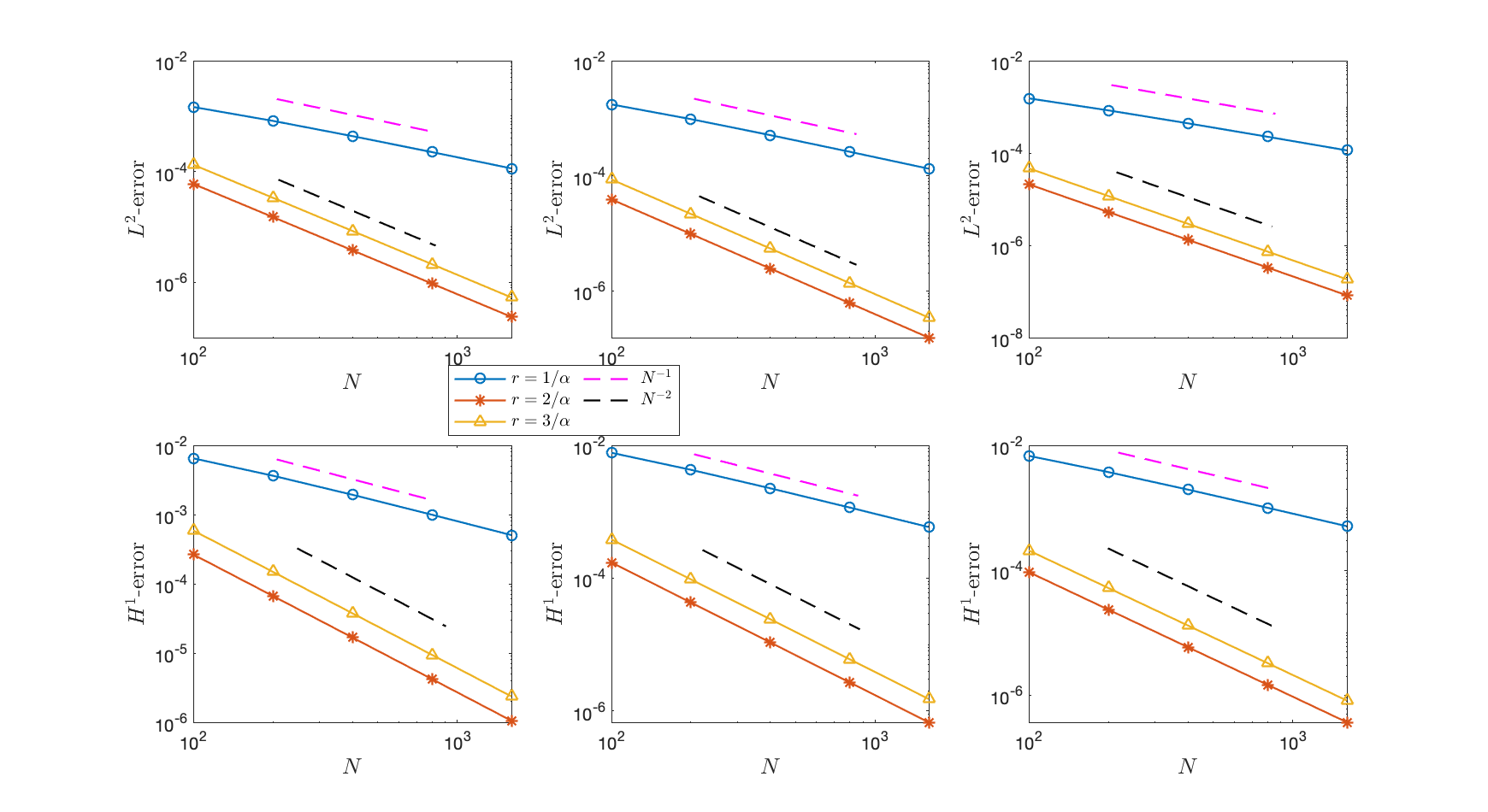}
    \vspace{-0.3in}
    \caption{(Example \ref{eq:nonexpl}) Maximum $L^2$-error (top) and $H^1$-error (bottom) w.r.t. $N$ for $\alpha=0.3,\ 0.5,\  0.7$ from  left to  right,  with  different $r$ for graded mesh \eqref{eq:tauj}. }
    \label{fig:L2H1errornonlinear}
\end{figure}

We now turn to show the  energy evolution for the problem (b) of Example \ref{eq:nonexpl} for $\alpha=0.3,\ 0.5,\ 0.7$. The energy is $
     E=\int_{\Omega}\left(\frac{\nu^2}{2}|\nabla u|^2+\cos(u)\right)\ \dx.
$
The spectral collocation method is applied in space with $10^2$ Chebyshev--Gauss--Lobatto points.
Let $T_{\rm{soe}}=300$, $\varepsilon=1e-12$, $\Delta t=\sigma\tau_2$ in  Lemma~\ref{lem:SOE_approx}. 
We use the following nonuniform time mesh:
\begin{equation}\label{eq:mesh}
\tau_j = \left\{
\begin{aligned}
& 0.1\left[\left(\frac {j} {100}\right)^r-\left(\frac {j-1}{100}\right)^r\right] && j \leq 100\\
    &1.01 \tau_{j-1} &&  j>100, ~1.01\tau_{j-1}<\tau_{\max} \coloneqq 0.02, \\
    &\tau_{\max} && \mbox{otherwise,}
\end{aligned}
\right.
\end{equation}
where $r =3$. This mesh satisfies  all constraints in Theorem~\ref{thm:nonlinear}. The energy evolution and time steps are illustrated in Figure~\ref{fig:energy1stability}, which is consistent with Theorem~\ref{thm:nonlinear}.

\begin{exmp}\label{eq:nonexamAC}
Consider the time-fractional Allen--Cahn (TFAC) equation 
  \begin{equation*} \label{eq:nonlinear2}
    \begin{aligned}
        \partial_t^\alpha u =\nu^2 \Delta u-f(u),\quad  (t,x,y)\in (0,\infty)\times(-1,1)^2
    \end{aligned}
\end{equation*}
 with  homogeneous Dirichlet boundary condition, $\nu^2=0.01$, 
 $u^0=0.05(2*\text{rand}(x,y)-1)$, and $f(u) = u^3-u$, where rand$(\cdot)$ stands for the random values in $(0,1)$.
\end{exmp}

Note that the global Lipschitz property  fails for the TFAC equation. Fortunately, it is well studied in \cite{du2020time} that  the TFAC equation satisfies the maximum principle, i.e., $\|u\|_\infty \leq 1$ if $\|u^0\|_\infty \leq 1$.
We use the following truncation technique for $f(u)$ (see for example \cite{shen2010numerical})
\begin{equation*} \label{eq:gk}
\hat{f}(u)= \\
\left\{
\begin{aligned}
&(3M^2-1)u-2M^3, &&u> M,\\
&u^3-u,&&|u|\le M,\\
    &(3M^2-1)u+2M^3, &&u<-M,
\end{aligned}
\right.
\end{equation*}  
one primitive integral of which is 
 \begin{equation*}
    \hat{F}(u)= \\
\left\{
\begin{aligned}
&\frac{3M^2-1}{2}u^2-2M^3u+\frac{3M^4+1}{4}, &&u> M,\\
      &\frac14(u^2-1)^2,&&|u|\le M,\\
    &\frac{3M^2-1}{2}u^2+2M^3u+\frac{3M^4+1}{4}, &&u<-M.
\end{aligned}
\right.
\end{equation*}
Here $M\geq 1$ is some positive constant. In particular, we set $M=1$ so that $\sup_{u\in\mathbb R} |\hat{f}'(u)|\leq 2 $, i.e., the Lipschitz condition is satisfied. 
Let $T_{\rm{soe}}=300$, $\varepsilon=1e-12$, $\Delta t=\sigma\tau_2$ in  Lemma~\ref{lem:SOE_approx}. 
The space is discretized by the spectral collocation method with $10^2$ Chebyshev--Gauss--Lobatto points. 
For the time discretization, we still use the time mesh~\eqref{eq:mesh}.

 Firstly we plot the energy evolution for the fast L2-1$_\sigma$ scheme~\eqref{eq:sch_subnonlinear} using the aforementioned truncation technique with $\alpha=0.4,\ 0.6,\ 0.8$.
 In this case,  the energy is computed by 
$
     \hat E=\int_{\Omega}\left(\frac{\nu^2}{2}|\nabla u|^2+\hat{F}(u)\right)\ \dx.
$ 
 It can be verified that the time mesh \eqref{eq:mesh} satisfies all the conditions in Theorem~\ref{thm:nonlinear} for $\alpha=0.6,\  0.8$. The energy stability result can be observed on the left-hand side of  Figure~\ref{fig:energy2stability} which agrees with  Theorem~\ref{thm:nonlinear}. However, the energy stability is still  observed for $\alpha=0.4$ despite that the conditions in Theorem~\ref{thm:nonlinear} are not satisfied.
 
 Secondly, we compare the original energies of the fast L2-1$_\sigma$ schemes~\eqref{eq:sch_subnonlinear} with $f(u)$ and $\hat{f}(u)$, on the right-hand side of Figure~\ref{fig:energy2stability}, where we take the time mesh~\eqref{eq:mesh} and $\alpha=0.8$. Here  the original energy is calculated by 
$
   E= \int_{\Omega}\left(\frac{\nu^2}{2}|\nabla u|^2+\frac14 (u^2-1)^2\right)\dx.
$
It is observed that the corresponding original energies almost coincide and both of them decay monotonically in time, although we can not provide a rigorous proof of this energy dissipation.
   \begin{figure}
    \centering
    \includegraphics[trim={1.3in 0in 1in 0in},clip,width=1.\textwidth]{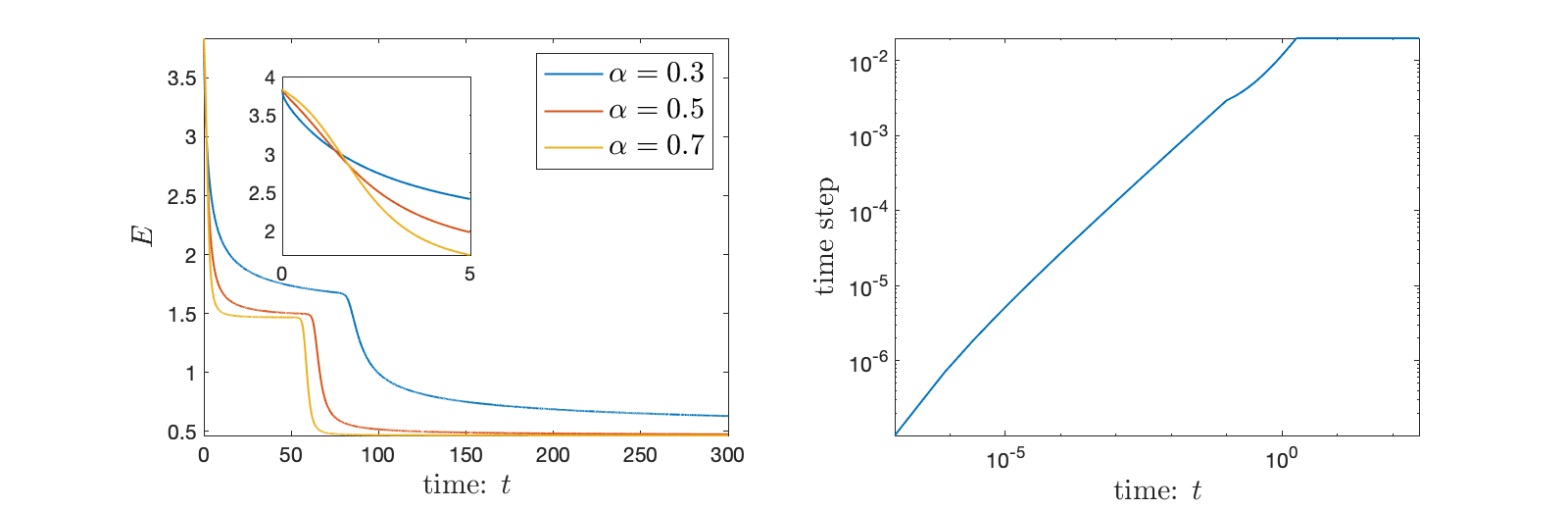}
    \vspace{-0.3in}
    \caption{(Example~\ref{eq:nonexpl}) Left: Energy evolution  w.r.t. time $t$ with $\alpha=0.3,\ 0.5,\ 0.7$. Right: Time step w.r.t.  time $t$.}
    \label{fig:energy1stability}
\end{figure}
  \begin{figure}
    \centering
    \includegraphics[trim={1.3in 0in 1in 0in},clip,width=1.\textwidth]{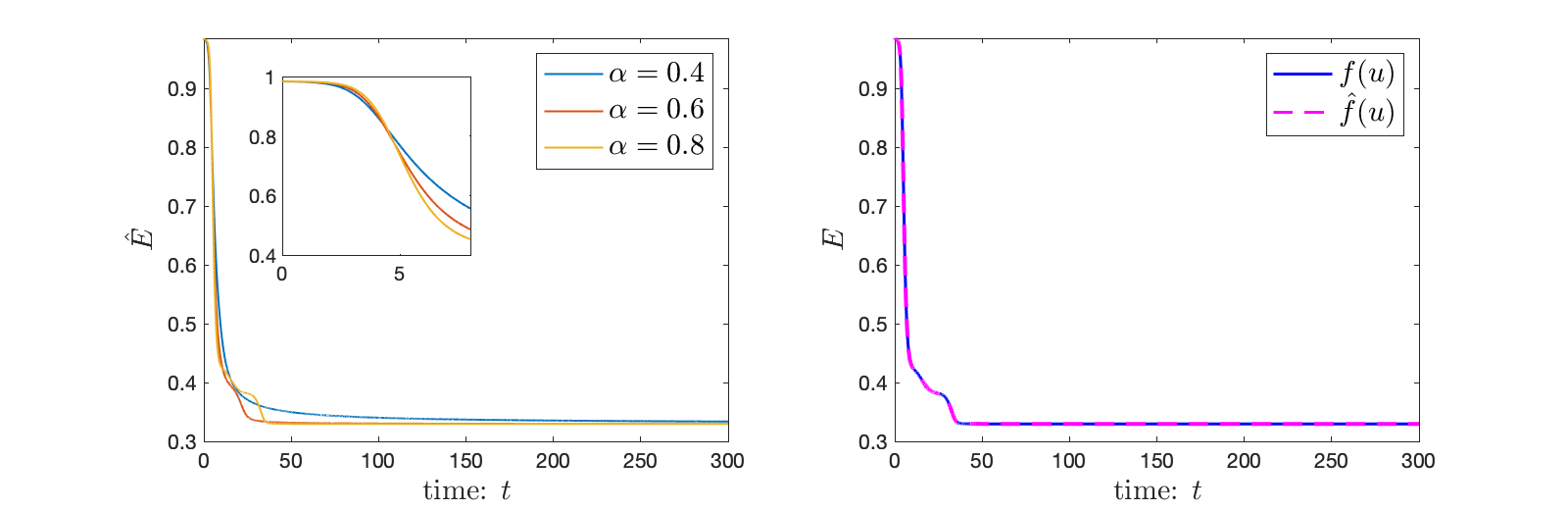}
    \vspace{-0.3in}
    \caption{ (Example~\ref{eq:nonexamAC}) Left: Energy evolution  w.r.t. time $t$ for $\alpha=0.4,\  0.6, \ 0.8$. Right:   Comparison of original energies  w.r.t. time $t$ with $\alpha=0.8$ for $f(u)$ and $\hat{f}(u)$.}
    \label{fig:energy2stability}
\end{figure}

\section{Conclusion}\label{sect7}
We have proved a new global-in-time $H^1$-stability of the fast L2-1$_\sigma$ scheme for linear and semilinear subdiffusion equations. 
The proposed scheme is constructed by combining  the L2-1$_\sigma$ scheme with the SOE approximation to the convolution kernel involved in the Caputo
fractional derivative.  
A crucial bilinear form $\mathcal B_n$, associated with the fast L2-1$_\sigma$ formula,  is proved to be positive semidefinite under some mild constraints of time steps for fixed SOE parameters $\varepsilon$, $\Delta t$, and $T_{\rm{soe}}$. 
Note that this positive semidefiniteness holds not only when $t\leq T_{\rm{soe}}$ but also for any time $t>T_{\rm{soe}}$.
Therefore, the uniform long time $H^1$-stability still holds as the time goes to infinity.

We also revisit the sharp $H^1$-error estimate in finite time  for the fast L2-1$_\sigma$ scheme, and we relax the constraint on the stepsize ratios from $\rho_k\geq 2/3$ in \cite{li2021second,liu2022unconditionally} to $\rho_k\geq \eta \approx0.475329$. So far, the long time optimal $H^1$-error estimate is still open for  the fast L2-1$_\sigma$ scheme with general nonuniform meshes. This might be solved with the help of positive semidefiniteness of the bilinear form associated with $F_k^{\alpha,*}$.

\section*{Acknowledgements}
C. Quan is supported by NSFC Grant 12271241, the Stable Support Plan Program of Shenzhen Natural Science Fund (Program Contract No. 20200925160747003), and Shenzhen Science and Technology Program (Grant No. RCYX20210609104358076). J. Yang is supported by the National Natural Science Foundation of China (NSFC) Grant No. 12271240, the NSFC/Hong Kong RGC Joint Research Scheme (NSFC/RGC 11961160718), the fund of the Guangdong Provincial Key Laboratory of Computational Science and Material Design, China (No.2019B030301001), and the Shenzhen Natural Science Fund (RCJC20210609103819018).

\bibliography{bibfile}
\bibliographystyle{amsplain}

\end{document}